\RequirePackage{fix-cm}
\documentclass[12pt,english]{extarticle}
\usepackage[T1]{fontenc}
\usepackage[utf8]{inputenc}
\usepackage{xcolor}
\usepackage{babel}
\usepackage{mathrsfs}
\usepackage{mathtools}
\usepackage{amsmath}
\usepackage{amsthm}
\usepackage{amssymb}
\usepackage{cancel}
\usepackage{geometry}
\usepackage[pdfusetitle,
 bookmarks=true,bookmarksnumbered=false,bookmarksopen=false,
 breaklinks=false,pdfborder={0 0 1},backref=false,colorlinks=true]
 {hyperref}
\hypersetup{
 linkcolor=blue,urlcolor=blue,citecolor=blue}

\makeatletter
\@ifundefined{date}{}{\date{}}
\usepackage{babel}
\usepackage{amsthm}
\usepackage{graphicx}
\usepackage{mlmodern}

\usepackage{xcolor}

\input{glyphtounicode}
\usepackage{fix-cm}
\usepackage{color}
\usepackage{bm}
\usepackage{mathrsfs}
\usepackage{graphicx}
\usepackage{booktabs}

\AtBeginDocument{

}

\theoremstyle{plain}
\newtheorem{thm}{\protect\theoremname}\theoremstyle{definition}
\newtheorem{defn}[thm]{\protect\definitionname}\theoremstyle{remark}
\newtheorem{rem}[thm]{\protect\remarkname}\theoremstyle{plain}
\newtheorem{prop}[thm]{\protect\propositionname}\providecommand{\corollaryname}{Corollary}
\providecommand{\definitionname}{Definition}
\providecommand{\examplename}{Example}
\providecommand{\propositionname}{Proposition}
\providecommand{\remarkname}{Remark}
\providecommand{\theoremname}{Theorem}

\makeatother

\theoremstyle{definition}
\newtheorem{example}[thm]{\protect\examplename}
\providecommand{\definitionname}{Definition}
\providecommand{\examplename}{Example}
\providecommand{\propositionname}{Proposition}
\providecommand{\remarkname}{Remark}
\providecommand{\theoremname}{Theorem}

\begin{document}
\title{A seminorm-only characterization of analytic Besov spaces on the disc}
\author{Maher Boudabra \thanks{Monastir Preparatory Engineering Institute, Monastir University, Tunisia.}}
\maketitle
\begin{abstract}
We introduce the space $\mathcal{W}^{s,p}(\mathbb{D})$ of analytic
functions $u$ on the unit disc such that the radial restrictions
$u_{r}(\xi):=u(r\xi)$ satisfy the Gagliardo seminorm-only bound
\[
\sup_{0<r<1}[u_{r}]_{W^{s,p}(\mathbb{S}^{1})}<\infty,
\]
with no $\emph{a priori}$ control of $\sup_{r}\|u_{r}\|_{L^{p}(\mathbb{S}^{1})}$. Our main result shows that this assumption already forces $u\in H^{p}(\mathbb{D})$
and that the radial boundary trace $u^{*}$ belongs to $W^{s,p}(\mathbb{S}^{1})$,
with $u_{r}\to u^{*}$ in $W^{s,p}(\mathbb{S}^{1})$ as $r\to1^{-}$.
The key mechanism combines the mean-value property (which pins the
constant mode at $u(0)$) with a fractional Poincar$\'e$ inequality
on $\mathbb{S}^{1}$, recovering $L^{p}$ control from oscillation
alone. As a consequence, the trace map $u\mapsto u^{*}$ is a surjective
isomorphism $\mathcal{W}^{s,p}(\mathbb{D})\xrightarrow{\sim}B^{s}_{p,p,+}(\mathbb{S}^{1})$
with explicit norm equivalence.
\end{abstract}

\section{Introduction}

Let $\Omega$ be a measurable set of $\mathbb{R}^{N}$, $N\in\mathbb{N}-\{0\}$.
For $u\in L^{p}(\Omega)$ the Gagliardo seminorm of $u$ is defined
by the quantity

\[
[u]_{W^{s,p}(\Omega)}:=\begin{cases}
{\displaystyle \bigg(\int_{\Omega}\int_{\Omega}\frac{|u(x)-u(y)|^{p}}{|x-y|^{N+sp}}\;dx\,dy\bigg)^{\frac{1}{p}},} & 1\le p<\infty,\\[1.2em]
{\displaystyle \text{ess}\sup_{x,y\in\Omega,\ x\neq y}\frac{|u(x)-u(y)|}{|x-y|^{s}},} & p=+\infty.
\end{cases}
\]
The quantity $[\cdot]_{W^{s,p}(\Omega)}$ is a seminorm. It assigns
the magnitude zero to constant functions for example. 
\begin{defn}
The $\emph{fractional Sobolev space}$ $W^{s,p}(\Omega)$ is defined
by 
\[
W^{s,p}(\Omega):=\Big\{ u\in L^{p}(\Omega):[u]_{W^{s,p}(\Omega)}<+\infty\Big\},
\]
endowed with the norm 
\[
\|u\|_{W^{s,p}(\Omega)}:=\|u\|_{L^{p}(\Omega)}+[u]_{W^{s,p}(\Omega)}.
\]
For $p=2$ we write $H^{s}(\Omega):=W^{s,2}(\Omega)$. 
\end{defn}

\begin{rem}[Fourier characterization on $\mathbb{R}^{N}$]
Note that 
\[
\|u\|_{L^{p}(\Omega)}\leq\|u\|_{W^{s,p}(\Omega)}
\]
Hence 
\[
W^{s,p}(\Omega)\hookrightarrow L^{p}(\Omega).
\]
When $\Omega=\mathbb{R}^{N}$ and $p=2$, the space $H^{s}(\mathbb{R}^{N})$
admits the equivalent description 
\[
H^{s}(\mathbb{R}^{N})=\Big\{ u\in L^{2}(\mathbb{R}^{N}):(1+|\xi|^{2})^{\frac{s}{2}}\,\widehat{u}(\xi)\in L^{2}(\mathbb{R}^{N})\Big\},
\]
with equivalent norm 
\[
\|u\|^{2}_{H^{s}(\mathbb{R}^{N})}\propto\int_{\mathbb{R}^{N}}(1+|\xi|^{2})^{s}|\widehat{u}(\xi)|^{2}\,d\xi.
\]
Moreover, 
\[
[u]^{2}_{H^{s}(\mathbb{R}^{N})}:=\int_{\mathbb{R}^{N}}\int_{\mathbb{R}^{N}}\frac{|u(x)-u(y)|^{2}}{|x-y|^{N+2s}}\,dx\,dy\propto\int_{\mathbb{R}^{N}}|\xi|^{2s}|\widehat{u}(\xi)|^{2}\,d\xi.
\]
In fact, $H^{s}(\mathbb{R}^{N})$ is a Hilbert space where the inner
product is given by 
\[
(u,v)_{H^{s}(\mathbb{R}^{N})}=\int_{\mathbb{R}^{N}}\int_{\mathbb{R}^{N}}\frac{(u(x)-u(y))(v(x)-v(y))}{|x-y|^{N+2s}}\,dx\,dy
\]
Thus, in the Hilbertian case $H^{s}(\mathbb{R}^{N})$ may be interpreted
as the space of functions having ``$s$ derivatives in $L^{2}$''
in a Fourier sense. The double integral in the Gagliardo seminorm
measures the average size of the difference quotient 
\[
\frac{\vert u(x)-u(y)\vert^{p}}{|x-y|^{N+sp}}
\]
over all pairs $(x,y)\in\Omega\times\Omega$, where the kernel $|x-y|^{-N-sp}$
plays the role of a ``fractional Jacobian''. In this sense $W^{s,p}(\Omega)$
controls both the size of $u$ and its oscillations on all scales,
interpolating between $L^{p}(\Omega)$ (when $s=0$) and the classical
Sobolev space $W^{1,p}(\Omega)$ (when $s=1$). Fractional Sobolev
spaces are naturally connected with the fractional Laplacian, which
is the nonlocal counterpart of the classical Laplace operator $\Delta$.
The fractional Laplacian $(-\Delta)^{s}$ is defined as the Fourier
multiplier 
\[
\widehat{(-\Delta)^{s}u}(\xi)=|\xi|^{2s}\widehat{u}(\xi).
\]
In particular, weak solutions of nonlocal equations driven by $(-\Delta)^{s}$
arise naturally as critical points of energies defined on $H^{s}(\mathbb{R}^{N})$,
making fractional Sobolev spaces the canonical functional setting
for the analysis of such problems. 
\end{rem}

In our setting, the (periodic) fractional Sobolev space is 
\[
W^{s,p}(\mathbb{S}^{1})=\{\gamma\in L^{p}(\mathbb{S}^{1},\mathbb{R})\mid[\gamma]_{W^{s,p}}<+\infty\}
\]
where 
\[
\begin{alignedat}{1}[\gamma]_{W^{s,p}(\mathbb{S}^{1})} & :=\left(\iint_{(-\pi,\pi)\times(-\pi,\pi)}{\textstyle \frac{\vert\gamma(e^{i\xi})-\gamma(e^{i\sigma})\vert^{p}}{\vert e^{i\xi}-e^{i\sigma}\vert^{1+sp}}}d\xi\,d\sigma\right)^{\frac{1}{p}}\\
 & =\left(\frac{1}{2^{1+sp}}\iint_{(-\pi,\pi)\times(-\pi,\pi)}{\textstyle \frac{\vert\gamma(e^{i\xi})-\gamma(e^{i\sigma})\vert^{p}}{\vert\sin(\frac{\xi-\sigma}{2})\vert^{1+sp}}}d\xi\,d\sigma\right)^{\frac{1}{p}}.
\end{alignedat}
\]

More generally, if 
\[
u(\theta)=\sum_{k\in\mathbb{Z}}\hat{u}(k)\,e^{ik\theta}\in\mathcal{D}'(\mathbb{S}^{1})
\]
(periodic distribution) then 
\[
(-\Delta)^{s}u(\theta)=\sum_{k\in\mathbb{Z}}\vert k\vert^{2s}\hat{u}(k)\,e^{ik\theta}
\]
with the convention $(-\Delta)^{0}u=u$. In particular 
\[
\Vert(-\Delta)^{\frac{s}{2}}u\Vert^{2}_{L^{2}}=\sum_{k\in\mathbb{Z}}\vert k\vert^{2s}\vert\hat{u}(k)\vert^{2}.
\]
The reader may notice that the kernel 
\[
\varrho_{s}(\theta)=\frac{1}{2^{1+sp}}\frac{1}{\vert\sin(\theta)\vert^{1+sp}}
\]
can be viewed as the wrapping (periodization) of the one dimensional
kernel $\frac{1}{\vert x\vert^{1+sp}}$. In particular, the following
estimate holds. 
\begin{prop}
The Gagliardo seminorm $[u]_{W^{s,2}}$ satisfies 
\[
[u]^{2}_{W^{s,2}}\propto\Vert(-\Delta)^{\frac{s}{2}}u\Vert^{2}_{L^{2}},
\]
with explicit Fourier identity 
\begin{equation}
[u]^{2}_{W^{s,2}(\mathbb{S}^{1})}:=\frac{1}{2^{1+2s}}\iint_{(-\pi,\pi)\times(-\pi,\pi)}{\textstyle \frac{|u(\xi)-u(\sigma)|^{2}}{\bigl|\sin\frac{\xi-\sigma}{2}\bigr|^{1+2s}}}\,d\xi\,d\sigma=\eta_{s}\sum_{k\in\mathbb{Z}}|k|^{2s}|\hat{u}(k)|^{2}.\label{eq:Hs-Fourier}
\end{equation}
\end{prop}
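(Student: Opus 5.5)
The plan is to diagonalize the double integral with Fourier series, so that the whole statement reduces to computing a single scalar multiplier $c_k$ for each frequency. First assume $u$ is a trigonometric polynomial; the general case follows by density and monotone convergence, since both sides are lower semicontinuous sums or integrals of nonnegative terms. Substitute $\sigma=\xi-\theta$ and use the $2\pi$-periodicity of the integrand to rewrite
\[
[u]^{2}_{W^{s,2}(\mathbb{S}^{1})}=\frac{1}{2^{1+2s}}\int_{-\pi}^{\pi}\frac{1}{|\sin\frac{\theta}{2}|^{1+2s}}\left(\int_{-\pi}^{\pi}|u(\xi)-u(\xi-\theta)|^{2}\,d\xi\right)d\theta .
\]
Parseval applied to the inner integral gives $2\pi\sum_{k}|\hat u(k)|^{2}|1-e^{-ik\theta}|^{2}=2\pi\sum_k|\hat u(k)|^2\,4\sin^{2}\frac{k\theta}{2}$. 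Tonelli then lets us exchange the sum and the $\theta$-integral, which yields
\[
[u]^{2}_{W^{s,2}(\mathbb{S}^{1})}=\sum_{k\in\mathbb{Z}}c_{k}|\hat u(k)|^{2},\qquad c_{k}:=\frac{8\pi}{2^{1+2s}}\int_{-\pi}^{\pi}\frac{\sin^{2}\frac{k\theta}{2}}{|\sin\frac{\theta}{2}|^{1+2s}}\,d\theta .
\]
The integral defining $c_k$ converges for $0<s<1$, because near $\theta=0$ the integrand behaves like $|\theta|^{1-2s}$.

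The remaining task is to identify $c_k$ with $\eta_s|k|^{2s}$. I would carry this out in three steps.
\begin{itemize}
\item The identity holds trivially for $k=0$, where $c_0=0$.
\item The multiplier is even in $k$, so it suffices to treat $k\ge1$.
\item For $k\ge1$, substitute $t=\theta/2$ and expand $\sin^{2}(kt)=\tfrac12(1-\cos 2kt)$. This reduces $c_k$ to the classical integrals $\int_{0}^{\pi/2}\cos(2kt)\sin^{-1-2s}t\,dt$, understood after subtracting the $k=0$ term. These are evaluated through the Beta/Gamma formula $\int_0^{\pi/2}\sin^{a-1}t\cos(bt)\,dt$, valid for $a>0$ and continued analytically in $a$.
\end{itemize}
The expected outcome is a closed form of the type
\[
c_k=C_s\left(\frac{\Gamma(k+\frac12+s)}{\Gamma(k+\frac12-s)}-\frac{\Gamma(\frac12+s)}{\Gamma(\frac12-s)}\right).
\]
By Stirling's formula this equals $C_s|k|^{2s}\bigl(1+O(|k|^{-1})\bigr)$. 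The constant $\eta_s$ is then read off as the leading coefficient, and the normalization $\tfrac{1}{2^{1+2s}}$ in the statement is matched to it.

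The main obstacle is this last identification. The Gamma-ratio is not literally a monomial in $k$, except in the Fejér-kernel case $s=\tfrac12$, where $\frac{\Gamma(k+1)}{\Gamma(k)}=k$ and the identity is exact. I would therefore establish the displayed formula for $c_k$ carefully, checking it at $s=\tfrac12$ against $\sum_k|k||\hat u(k)|^2$. The statement's "$\propto$" would then be realized with the explicit constant $\eta_s$ coming from the leading Stirling term. The correction factor satisfies $c_k/(\eta_s|k|^{2s})\in[m_s,M_s]$ with $0<m_s\le M_s<\infty$, uniformly in $k\neq0$. This follows from monotonicity of the Gamma ratio in $k$ together with its limit $1$ as $k\to\infty$.
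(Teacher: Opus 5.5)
The paper gives no proof of this proposition. Its only justification is the preceding remark that the sine kernel is the wrapping of $|x|^{-1-2s}$. Your diagonalization is the natural argument and is correct: pass to the difference variable, apply Parseval in $\xi$ at fixed $\theta$, then Tonelli. Your closed form for $c_k$ is also correct.

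You should state plainly what your computation shows: the displayed identity is \emph{false} for every $s\neq\frac12$, whatever $\eta_s$ is. This can be checked by hand. Put $J_k:=\int_0^{\pi/2}\sin^2(kt)\,\sin^{-1-2s}t\,dt$, so that $c_k$ is a fixed multiple of $J_k$. Then $J_1=\int_0^{\pi/2}\sin^{1-2s}t\,dt$, and Wallis' reduction gives
\[
J_2=4\int_0^{\pi/2}\sin^{1-2s}t\,\cos^{2}t\,dt=\frac{4}{3-2s}\,J_1 .
\]
This agrees with your formula, since $\frac{R(2)-R(0)}{R(1)-R(0)}=\frac{4}{3-2s}$ for $R(k):=\Gamma(k+\frac12+s)/\Gamma(k+\frac12-s)$. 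Testing the claimed identity on $u=e^{i\theta}$ and $u=e^{2i\theta}$ would force $\frac{4}{3-2s}=2^{2s}$. On $(0,1)$ this holds only at $s=\frac12$; at $s=\frac14$ the two sides are $1.6$ and $\sqrt2$.

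The underlying reason is that $\sum_{m\in\mathbb{Z}}(\theta+2\pi m)^{-2}=\frac{1}{4\sin^{2}(\theta/2)}$. So the chordal kernel is the wrapping of $|x|^{-1-2s}$ only when $s=\frac12$. An exact identity for all $s$ holds if one uses the periodized kernel $\sum_m|\theta+2\pi m|^{-1-2s}$ instead.

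The correct proposition is therefore exactly what you establish:
\[
[u]^2_{W^{s,2}(\mathbb{S}^1)}=\sum_k c_k|\hat u(k)|^2,\qquad c_k\asymp|k|^{2s},
\]
with exact proportionality only at $s=\frac12$ and ``$\propto$'' read as two-sided equivalence. This weaker form is all the paper's $p=2$ argument needs; it runs essentially verbatim with $k^{2s}$ replaced by $c_k$.

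Three smaller points.

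\textbf{Density step.} It is superfluous, and as phrased it would give only one inequality, since lower semicontinuity yields only $\le$. Parseval and Tonelli already give the identity for every $u\in L^2(\mathbb{S}^1)$, both sides possibly infinite.

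\textbf{Error term.} Stirling gives $R(k)=k^{2s}(1+O(k^{-2}))$. The relative error in $c_k\approx C_s|k|^{2s}$ therefore comes from the constant $R(0)$. For $s<\frac12$ it is of exact order $|k|^{-2s}$, not $O(|k|^{-1})$.

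\textbf{Monotonicity.} It is not needed. Since $c_k>0$ for $k\neq0$ and $c_k/(C_s|k|^{2s})\to1$, the uniform two-sided bounds follow.

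You can also get the equivalence without Gamma functions. On $[0,\pi]$ one has $\theta/\pi\le\sin(\theta/2)\le\theta/2$. This makes $c_k$ comparable, with constants depending only on $s$, to
\[
\int_0^{\pi}\frac{\sin^{2}(k\theta/2)}{\theta^{1+2s}}\,d\theta=|k|^{2s}\int_0^{|k|\pi}\frac{\sin^{2}(x/2)}{x^{1+2s}}\,dx .
\]
For $|k|\ge1$ this lies between $|k|^{2s}\int_0^{\pi}$ and $|k|^{2s}\int_0^{\infty}$ of the same integrand. Both integrals are finite and positive for $0<s<1$. The Gamma computation is then needed only to show that exactness fails, and the $k=1,2$ check above already does that.
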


For $0<r<1$, the Poisson kernel is defined by 
\[
P_{r}(\theta)=\frac{1-r^{2}}{1-2r\cos\theta+r^{2}},\qquad\theta\in[-\pi,\pi],
\]
and satisfies 
\[
\int^{\pi}_{-\pi}P_{r}(\theta)\,\frac{d\theta}{2\pi}=1,\qquad P_{r}(\theta)\ge0.
\]
For a distribution (or function) $u$ on $\mathbb{S}^{1}$ we define
the Poisson mean 
\[
\mathscr{P}_{r}u:=P_{r}*u=\int_{\mathbb{S}^{1}}P_{r}(t)u(\cdot-t)\,\frac{dt}{2\pi}.
\]
That is, $\mathscr{P}_{r}u$ is a harmonic function in the unit disc,
with boundary value $u$ whenever $u\in L^{1}(\mathbb{S}^{1})$ (see
Rudin's book for example). 
\begin{defn}
Let $f$ be an analytic function acting on the unit disc and $p>0$.
The $p^{th}$- Hardy norm of $f$ is defined by 
\begin{equation}
\Vert f\Vert_{\mathbf{H}^{p}(\mathbb{D})}:=\sup_{0\leq r<1}\left\{ \frac{1}{2\pi}\int^{2\pi}_{0}|f(re^{\theta i})|^{p}d\theta\right\} ^{\frac{1}{p}}.\label{hardy}
\end{equation}
\end{defn}

The Hardy space (of index $p$), denoted by $\mathbf{H}^{p}(\mathbb{D})$
is the space of functions with finite $p^{th}$- Hardy norm. A crucial
result about Hardy norms is that, if $\Vert f\Vert_{\mathbf{H}^{p}(\mathbb{D})}$
is finite then $f$ has a radial extension to the boundary 
\[
f^{*}(\xi):=\lim_{r\rightarrow1}f(r\xi)
\]
exists a.e for all $\xi\in\mathbb{S}^{1}$. Moreover 
\[
\Vert f^{*}\Vert_{p}=\Vert f\Vert_{\mathbf{H}^{p}(\mathbb{D})}.
\]
We invite the reader to look at \cite{Rudin2001,duren2000theory}
for a more in-depth comprehensive treatment of the theory of Hardy
spaces. For the sake of simplicity, we shall keep writing $f$ instead
of $f^{*}$. The Hardy space is also defined for harmonic functions,
and often called real Hardy space (See \cite{duren2000theory}).

The classical Hardy-Besov spaces $B^{s}_{p,p,+}$ of analytic functions
on the disc are defined as the analytic subspace of $B^{s}_{p,p}(\mathbb{S}^{1})\simeq W^{s,p}(\mathbb{S}^{1})$.
They have been extensively studied, notably by Flett \cite{Flett1972},
Oswald \cite{Oswald1983}, and Triebel \cite{TriebelChar1988}. In
these works, boundary smoothness is either prescribed from the outset
or built into the definition via Hardy norms. The present paper takes
a different starting point: we ask how much can be deduced from the
sole knowledge that the Gagliardo seminorms $[u_{r}]_{W^{s,p}(\mathbb{S}^{1})}$
are uniformly bounded, with no $L^{p}$ or Hardy assumption. The answer,
given by our main theorem below, is that this minimal hypothesis already
forces full membership in $\mathbf{H}^{p}(\mathbb{D})$ and $W^{s,p}$
regularity of the boundary trace. The key tool is the interplay between
the mean-value property of analytic functions and the fractional Poincaré
inequality on $\mathbb{S}^{1}$.

\medskip
\noindent\textbf{Main result and contribution.}
Given an analytic function $u$ on $\mathbb{D}$, write its radial restrictions
$u_r(\xi):=u(r\xi)$ for $0<r<1$ and $\xi\in\mathbb{S}^1$.
We introduce the Banach space
\[
\mathcal{W}^{s,p}(\mathbb{D})
:=\left\{u \text{ analytic on }\mathbb{D}:\ \sup_{0<r<1}[u_r]_{W^{s,p}(\mathbb{S}^1)}<\infty\right\},
\qquad
\|u\|_{\mathcal{W}^{s,p}}:=|u(0)|+\sup_{0<r<1}[u_r]_{W^{s,p}(\mathbb{S}^1)}.
\]
The distinctive feature is that the defining assumption controls only tangential oscillation of each slice
and makes no \emph{a priori} assumption on $\sup_r\|u_r\|_{L^p(\mathbb{S}^1)}$.
Theorem \ref{Thm main result} shows that, for analytic functions, this seminorm-only hypothesis already forces
$u\in \mathbf{H}^p(\mathbb{D})$ and yields a boundary trace $u^*\in W^{s,p}(\mathbb{S}^1)$ with
$u_r\to u^*$ in $W^{s,p}(\mathbb{S}^1)$ as $r\to1^{-}$.
The key mechanism is the combination of the mean-value property $\overline{u_r}=u(0)$ (which pins the constant mode)
with a fractional Poincar\'e inequality on $\mathbb{S}^1$, recovering $L^p$ control from oscillation alone.
As a consequence, the trace map $u\mapsto u^*$ is identified as a surjective isomorphism
$\mathcal{W}^{s,p}(\mathbb{D})\xrightarrow{\sim}B^s_{p,p,+}(\mathbb{S}^1)$ with explicit norm equivalence constants
(Theorem~\ref{thm:besov-iso}). This identification is a \emph{consequence} of the seminorm criterion and is not used to
prove Theorem \ref{Thm main result}.

\medskip
\noindent\textbf{Organization.}
Section~\ref{sec:space} studies $\mathcal{W}^{s,p}(\mathbb{D})$ as a Banach space and proves the trace isomorphism
with $B^s_{p,p,+}(\mathbb{S}^1)$. Section~\ref{sec:applications} contains applications (harmonic and semilinear problems,
a variational characterization, and a stochastic interpretation), and Section~\ref{sec:examples} collects explicit examples.
\medskip
We first recall the strong continuity of the Poisson semigroup on $W^{s,p}(\mathbb S^1)$,
which is used in Step~3 of the proof of Theorem~\ref{Thm main result}.
\begin{thm}[{\cite{Bui1984,Kalyabin1988,TriebelChar1988}}]
\label{fractional Poisson} Let $1<p<\infty$ and $0<s<1$. Then
for every $u\in W^{s,p}(\mathbb{S}^{1})$, 
\[
\|\mathscr{P}_{r}u-u\|_{W^{s,p}(\mathbb{S}^{1})}\longrightarrow0\qquad\text{as }r\to1^{-}.
\]
(Here we use the identification $W^{s,p}(\mathbb{S}^{1})\simeq B^{s}_{p,p}(\mathbb{S}^{1})$;
the cited references establish strong continuity in $B^{s}_{p,p}$.) 
\end{thm}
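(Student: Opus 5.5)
The argument is the standard approximate-identity argument for the Poisson kernel, run in the translation-invariant space $W^{s,p}(\mathbb{S}^{1})$. It uses three facts.

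\textbf{Step 1 (uniform bound).} The operators $\mathscr{P}_{r}$ are uniformly bounded on $W^{s,p}(\mathbb{S}^{1})$ with norm at most $1$. Since $P_{r}\ge0$ and $\int P_{r}\,\frac{dt}{2\pi}=1$, the function $\mathscr{P}_{r}u$ is an average of the translates $\tau_{t}u:=u(\cdot-t)$ with respect to the probability measure $P_{r}(t)\frac{dt}{2\pi}$. Minkowski's integral inequality then gives $\|\mathscr{P}_{r}u\|_{L^{p}}\le\|u\|_{L^{p}}$. For the seminorm, set
\[
\Delta u(\xi,\sigma):=\frac{u(\xi)-u(\sigma)}{|e^{i\xi}-e^{i\sigma}|^{\frac{1}{p}+s}}.
\]
Then $\Delta(\mathscr{P}_{r}u)=\int P_{r}(t)\,\Delta(\tau_{t}u)\,\frac{dt}{2\pi}$. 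The kernel depends only on $\xi-\sigma$, so $[\tau_{t}u]_{W^{s,p}}=[u]_{W^{s,p}}$, and Minkowski in $L^{p}(d\xi\,d\sigma)$ yields $[\mathscr{P}_{r}u]_{W^{s,p}}\le[u]_{W^{s,p}}$.

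\textbf{Step 2 (continuity of translations).} The map $t\mapsto\tau_{t}u$ is continuous from $\mathbb{S}^{1}$ into $W^{s,p}(\mathbb{S}^{1})$. For the $L^{p}$ part this is classical. For the seminorm, note that $[\tau_{t}u-u]_{W^{s,p}}=\|\Delta u(\cdot-t,\cdot-t)-\Delta u\|_{L^{p}(\mathbb{T}^{2})}$. This is the diagonal translation of the single function $\Delta u\in L^{p}(\mathbb{T}^{2})$, so it tends to $0$ by continuity of translation in $L^{p}(\mathbb{T}^{2})$, because $p<\infty$.

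\textbf{Step 3 (conclusion).} Writing
\[
\mathscr{P}_{r}u-u=\int P_{r}(t)\,(\tau_{t}u-u)\,\frac{dt}{2\pi},
\]
Minkowski gives
\[
\|\mathscr{P}_{r}u-u\|_{W^{s,p}}\le\int P_{r}(t)\,\|\tau_{t}u-u\|_{W^{s,p}}\,\frac{dt}{2\pi}.
\]
Split this integral into $|t|<\delta$ and $|t|\ge\delta$. On the first piece the integrand is at most $\varepsilon$ by Step 2. On the second, $\|\tau_{t}u-u\|_{W^{s,p}}\le2\|u\|_{W^{s,p}}$, while $\sup_{|t|\ge\delta}P_{r}(t)\to0$ as $r\to1^{-}$. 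Hence $\limsup_{r\to1^{-}}\|\mathscr{P}_{r}u-u\|_{W^{s,p}}\le\varepsilon$ for every $\varepsilon>0$, which proves the claim.

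The only step requiring care is Step 2. The delicate point is justifying that the vector-valued integrals commute with the seminorm, i.e.\ that $\Delta$ of the Poisson average is the average of $\Delta(\tau_{t}u)$ pointwise a.e. This follows from Fubini, since $\Delta$ is linear and all integrands are measurable with finite $L^{p}$ norm. If needed, one can instead apply Fatou/Minkowski directly to the double integral. This route avoids the Besov identification entirely, and works for all $1\le p<\infty$.
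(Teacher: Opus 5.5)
Your argument is correct, and it takes a genuinely different route from the paper. The paper does not prove this statement at all. It imports strong continuity of the Poisson semigroup on $B^{s}_{p,p}(\mathbb{S}^{1})$ from Bui, Kalyabin and Triebel, and transfers it to $W^{s,p}(\mathbb{S}^{1})$ through the norm equivalence $W^{s,p}\simeq B^{s}_{p,p}$.

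You instead show directly that $W^{s,p}(\mathbb{S}^{1})$, with the intrinsic Gagliardo norm, is a homogeneous Banach space in Katznelson's sense:
\begin{itemize}
\item translations act isometrically, because the kernel depends only on $\xi-\sigma$;
\item translations act continuously, because $[\tau_{t}u-u]_{W^{s,p}}$ is the $L^{p}(\mathbb{T}^{2})$-distance between the single function $\Delta u$ and its diagonal translate.
\end{itemize}
The conclusion then follows from $(P_{r})$ being a positive summability kernel.

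What your route buys:
\begin{itemize}
\item It is elementary and self-contained.
\item It works with the exact Gagliardo norm, with no equivalence constants, and it also covers $p=1$.
\item It makes Step~3 of the main theorem independent of Besov theory.
\item Your Step~1 gives the exact contraction $[\mathscr{P}_{r}u]_{W^{s,p}}\le[u]_{W^{s,p}}$ directly for the intrinsic kernel $|e^{i\xi}-e^{i\sigma}|^{-1-sp}$. The paper's proof of Proposition~\ref{prop:poisson-contraction}, as written, runs the Jensen argument with the flat kernel $|t|^{-1-sp}$ and then appeals to $|e^{it}-1|\sim|t|$. That only yields the intrinsic contraction up to a constant, whereas the equality in Proposition~\ref{prop:equiv-norm} needs the exact version.
\end{itemize}
What the citation buys is generality: all $s>0$, the spaces $B^{s}_{p,q}$ with $q<\infty$, and the same framework the paper later leans on for the Hilbert transform and interpolation. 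None of that is needed for this particular statement.

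One small correction to your closing remark. The issue of commuting $\Delta$ with the Poisson average arises in Steps~1 and~3, not Step~2, and it is harmless. Since $P_{r}$ is bounded and $u\in L^{1}$, the identity $\Delta(\mathscr{P}_{r}u)(\xi,\sigma)=\int P_{r}(t)\,\Delta u(\xi-t,\sigma-t)\,\frac{dt}{2\pi}$ holds for every $\xi\neq\sigma$. Joint measurability in $(t,\xi,\sigma)$ is then all that Minkowski's integral inequality requires.
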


Theorem \ref{fractional Poisson} is referred to as the continuity
of the fractional Poisson operator. It says that one can construct
a harmonic function inside the unit disc from boundary data in $W^{s,p}(\mathbb{S}^{1})$,
and that the construction is continuous. The natural converse question
is the following: \emph{given an analytic function $u$ in the unit
disc whose radial restrictions $u_{r}$ have uniformly controlled
fractional oscillation, must $u$ admit a boundary trace with the
same regularity?}

In the classical Hardy space theory, membership in $\mathbf{H}^{p}(\mathbb{D})$
is established by controlling the full $L^{p}$ means $\sup_{r}\|u_{r}\|_{L^{p}}$.
The distinctive feature of the following theorem is that \emph{no
such $L^{p}$ control is assumed}. Only the homogeneous Gagliardo
seminorm $[u_{r}]_{W^{s,p}(\mathbb{S}^{1})}$, which measures oscillation
but discards mean values, is required to be uniformly bounded. The
$L^{p}$ control, and hence Hardy space membership, is then recovered
as a consequence, via the mean-value property of analytic functions
and a fractional Poincaré inequality on $\mathbb{S}^{1}$. This recovery
of size from oscillation is the core contribution of the result.

By abuse of language, we say that a complex-valued function is harmonic
in the unit disc if both its real and imaginary parts are harmonic.
In what follows, $0<s<1$ and $1<p<+\infty$ unless otherwise mentioned.
\begin{thm}[Seminorm criterion for Hardy membership and boundary trace]
\label{Thm main result} Let $u$ be an analytic function in the
unit disc. Since $u$ is analytic, each radial restriction $u_{r}:\xi\mapsto u(r\xi)$
is smooth on $\mathbb{S}^{1}$ and in particular lies in $W^{s,p}(\mathbb{S}^{1})$
for every $r\in(0,1)$. Assume that the Gagliardo seminorms of the
radial restrictions are uniformly bounded, i.e. 
\begin{equation}
\sup_{0<r<1}[u_{r}]_{W^{s,p}(\mathbb{S}^{1})}<+\infty.\label{eq:uniform-gagliardo}
\end{equation}
Then the following conclusions hold. 
\begin{enumerate}
\item $u$ belongs to the Hardy space $\mathbf{H}^{p}(\mathbb{D})$, and
in particular admits a radial boundary trace $u^{*}\in L^{p}(\mathbb{S}^{1})$
satisfying 
\[
\lim_{r\to1^{-}}u(re^{i\theta})=u^{*}(e^{i\theta})\quad\text{for almost every }\theta\in(-\pi,\pi),
\]
with convergence also in $L^{p}(\mathbb{S}^{1})$. 
\item The boundary trace has fractional Sobolev regularity: $u^{*}\in W^{s,p}(\mathbb{S}^{1})$. 
\item The Gagliardo seminorms converge: 
\[
\lim_{r\to1^{-}}[u_{r}-u^{*}]_{W^{s,p}(\mathbb{S}^{1})}=0,
\]
and in particular $[u_{r}]_{W^{s,p}(\mathbb{S}^{1})}\to[u^{*}]_{W^{s,p}(\mathbb{S}^{1})}$
as $r\to1^{-}$. 
\end{enumerate}
\end{thm}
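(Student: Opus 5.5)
The plan has three steps, matching the three conclusions.

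\textbf{Step 1: recover $L^{p}$ control.} By the mean-value property, the average of $u_{r}$ over $\mathbb{S}^{1}$ equals $u(0)$ for every $r$. The fractional Poincaré inequality on $\mathbb{S}^{1}$ follows from Jensen's inequality. Writing $\bar\gamma=\frac{1}{2\pi}\int_{-\pi}^{\pi}\gamma$, we have
\[
\|\gamma-\bar\gamma\|^{p}_{L^{p}}\le\frac{1}{2\pi}\iint|\gamma(\xi)-\gamma(\sigma)|^{p}\,d\xi\,d\sigma\le\frac{2^{1+sp}}{2\pi}[\gamma]^{p}_{W^{s,p}},
\]
because $|e^{i\xi}-e^{i\sigma}|\le2$. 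Applying this with $\gamma=u_{r}$ gives
\[
\|u_{r}\|_{L^{p}}\le(2\pi)^{1/p}|u(0)|+C_{s,p}\sup_{r}[u_{r}]_{W^{s,p}}.
\]
The right-hand side is independent of $r$, so $u\in\mathbf{H}^{p}(\mathbb{D})$. Since $p>1$, classical Hardy space theory then yields a trace $u^{*}\in L^{p}$ with $u_{r}\to u^{*}$ both a.e. and in $L^{p}$, and moreover $u_{r}=\mathscr{P}_{r}u^{*}$. This settles conclusion 1.

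\textbf{Step 2: regularity of the trace.} Take $r_{n}\to1$. Along a subsequence, $u_{r_{n}}\to u^{*}$ a.e. on $\mathbb{S}^{1}$, so the integrand $|u_{r_{n}}(\xi)-u_{r_{n}}(\sigma)|^{p}/|e^{i\xi}-e^{i\sigma}|^{1+sp}$ converges a.e. on the torus $\mathbb{S}^{1}\times\mathbb{S}^{1}$. Fatou's lemma then gives
\[
[u^{*}]_{W^{s,p}}\le\liminf_{n}[u_{r_{n}}]_{W^{s,p}}\le\sup_{r}[u_{r}]_{W^{s,p}}<\infty.
\]
Combined with $u^{*}\in L^{p}$, this shows $u^{*}\in W^{s,p}(\mathbb{S}^{1})$, which is conclusion 2.

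\textbf{Step 3: convergence of the seminorms.} Since $u_{r}=\mathscr{P}_{r}u^{*}$ and $u^{*}\in W^{s,p}$, Theorem~\ref{fractional Poisson} gives $\|u_{r}-u^{*}\|_{W^{s,p}}\to0$. In particular $[u_{r}-u^{*}]_{W^{s,p}}\to0$. The triangle inequality for the seminorm then gives $[u_{r}]_{W^{s,p}}\to[u^{*}]_{W^{s,p}}$, which is conclusion 3.

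The only point needing care is the identity $u_{r}=\mathscr{P}_{r}u^{*}$. It is the standard Poisson representation of $H^{p}$ functions for $p>1$; one justifies it by letting $\rho\to1$ in $u_{r\rho}=\mathscr{P}_{r}u_{\rho}$ and using the $L^{p}$ convergence $u_{\rho}\to u^{*}$. Beyond that, the main content is the recovery of $L^{p}$ control from oscillation in Step~1, and that step is elementary.
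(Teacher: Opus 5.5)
Your proof is correct. It has the same three-step architecture as the paper: mean-value property plus Poincaré for Hardy membership, then regularity of the trace, then Theorem~\ref{fractional Poisson} for conclusion 3, which is exactly the paper's final step. The differences lie in how the first two steps are executed.

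For the Poincaré inequality, the paper cites the general fractional Poincaré inequality from Leoni, transfers it to $\mathbb{S}^{1}$, and leaves $\kappa$ unspecified. Your Jensen argument instead uses the fact that on the circle the kernel $|e^{i\xi}-e^{i\sigma}|^{-1-sp}$ is bounded below by $2^{-1-sp}$. This makes the inequality elementary and gives the explicit constant $\kappa=(2^{1+sp}/2\pi)^{1/p}$. As a result, the constant $C=\kappa+(2\pi)^{1/p}$ in Proposition~\ref{prop:banach} and Theorem~\ref{thm:besov-iso} becomes genuinely explicit.

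For the regularity of the trace, the paper bounds $(u_{r})$ in the full $W^{s,p}$ norm. It then invokes reflexivity of $W^{s,p}(\mathbb{S}^{1})$ and Eberlein--\v{S}mulian, and identifies the weak limit with $u^{*}$ through the strong $L^{p}$ convergence. Your Fatou argument is the same lower-semicontinuity device the paper itself uses in the proof of Proposition~\ref{prop:banach}. It is shorter, needs only a.e. convergence rather than reflexivity, and yields the quantitative bound $[u^{*}]_{W^{s,p}}\le\sup_{r}[u_{r}]_{W^{s,p}}$ directly. Combined with Proposition~\ref{prop:poisson-contraction}, this bound already gives the equality in Proposition~\ref{prop:equiv-norm} without passing through conclusion 3; the paper derives that equality from the seminorm convergence instead.

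Finally, because your argument treats all $1<p<\infty$ uniformly, the paper's separate Fourier-series treatment of $p=2$ becomes unnecessary.
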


\begin{rem}
The hypothesis \eqref{eq:uniform-gagliardo} is \emph{a priori} weaker
than requiring $u\in\mathbf{H}^{p}(\mathbb{D})$ with $u^{*}\in W^{s,p}(\mathbb{S}^{1})$
from the outset: the latter implies the former by Poisson contraction
(Proposition~\ref{prop:poisson-contraction}), since 
\[
[u_{r}]_{W^{s,p}}=[(P_{r}*u^{*})]_{W^{s,p}}\le[u^{*}]_{W^{s,p}}
\]
 uniformly in $r$, but the former does not trivially imply the latter.
The content of the theorem is precisely that these two conditions
are in fact equivalent, and that the seminorm-only side is sufficient. 
\end{rem}

The proof proceeds in three steps: 
\begin{enumerate}
\item \textbf{Hardy membership.} We show $u\in\mathbf{H}^{p}(\mathbb{D})$
using the fractional Poincaré inequality on $\mathbb{S}^{1}$ together
with the mean-value property $\overline{u_{r}}=u(0)$. This is the
key step: the constant mode is pinned by $u(0)$, allowing the seminorm
bound to control the full $L^{p}$ norm of $u_{r}$. 
\item \textbf{Boundary regularity.} Once $u\in\mathbf{H}^{p}(\mathbb{D})$,
the boundary trace $u^{*}\in L^{p}(\mathbb{S}^{1})$ exists. We promote
this to $u^{*}\in W^{s,p}(\mathbb{S}^{1})$ using reflexivity of $W^{s,p}$
and a weak compactness argument. 
\item \textbf{Seminorm convergence.} We use the strong continuity of the
Poisson semigroup on 
\[
B^{s}_{p,p}(\mathbb{S}^{1})\simeq W^{s,p}(\mathbb{S}^{1})
\]
 to conclude that 
\[
[u_{r}-u^{*}]_{W^{s,p}}\to0.
\]
\end{enumerate}

\subsubsection*{Step 1: Case $p=2$ (Hilbertian argument)}

\noindent The case $p=2$ is presented separately because the Fourier
series structure of $H^{2}(\mathbb{D})$ makes the argument completely
explicit and self-contained, with no appeal to reflexivity or weak
compactness. The general case below subsumes this one, but the $p=2$
argument serves as transparent motivation for all three steps.

\noindent For each $0<r<1$, write the Taylor expansion of $u$ as
\[
u_{r}(e^{\theta i})=\sum^{+\infty}_{k=0}a_{k}r^{k}e^{ik\theta},\qquad\theta\in(-\pi,\pi),
\]
In particular, the Fourier coefficients of $u_{r}$ on $\mathbb{S}^{1}$
are 
\[
(u_{r})_{k}=a_{k}r^{k}\quad\text{for }k\ge0,\qquad(u_{r})_{k}=0\quad\text{for }k<0.
\]
In particular, by \ref{eq:Hs-Fourier} we obtain 
\begin{align*}
[u_{r}]^{2}_{W^{s,2}(\mathbb{S}^{1})} & \propto\sum_{k\in\mathbb{Z}}|k|^{2s}\,|(u_{r})_{k}|^{2}\\
 & =\underset{=S(r)}{\underbrace{\sum^{\infty}_{k=1}k^{2s}|a_{k}|^{2}r^{2k}}}.
\end{align*}
By hypothesis \ref{eq:uniform-gagliardo} with $p=2$, we have 
\[
\sup_{0<r<1}S(r)<+\infty.
\]
Observe that for each fixed $k\ge1$ the function $r\mapsto r^{2k}$
is increasing on $(0,1)$, and all terms in the series for $S(r)$
are nonnegative. Hence $S(r)$ is monotone nondecreasing in $r$,
and by monotone convergence, 
\begin{equation}
\lim_{r\to1^{-}}S(r)=\sum^{\infty}_{k=1}k^{2s}|a_{k}|^{2}\le\sup_{0<r<1}S(r)<+\infty.\label{eq:limit-Sr}
\end{equation}
In particular, 
\begin{equation}
\sum^{\infty}_{k=1}k^{2s}|a_{k}|^{2}<\infty.\label{eq:sum-k2s-ak2}
\end{equation}
Since $k^{2s}\ge1$ for all $k\ge1$, \ref{eq:sum-k2s-ak2} implies
\[
\sum^{+\infty}_{k=0}|a_{k}|^{2}=|a_{0}|^{2}+\sum^{\infty}_{k=1}|a_{k}|^{2}\le|a_{0}|^{2}+\sum^{\infty}_{k=1}k^{2s}|a_{k}|^{2}<\infty.
\]
Thus $u\in\mathbf{H}^{2}(\mathbb{D})$, and moreover 
\[
\Vert u\Vert^{2}_{\mathbf{H}^{2}(\mathbb{D})}<+\infty.
\]
By the classical theory (see, e.g., \cite{duren2000theory,Rudin2001})
there exists a boundary function $u^{*}\in L^{2}(\mathbb{S}^{1})$
such that 
\[
\lim_{r\to1^{-}}u(re^{i\theta})=u^{*}(e^{i\theta})\quad\text{for almost every }\theta\in(-\pi,\pi),
\]
and the convergence also holds in $L^{2}(\mathbb{S}^{1})$. This proves
conclusion (1) of the theorem for $p=2$. To prove (2) and (3), remark
that the Fourier coefficients of $u^{*}$ are exactly $a_{k}$ for
$k\ge0$ and $0$ for $k<0$, and therefore, by Cauchy integral formula
and \ref{eq:Hs-Fourier}, 
\begin{align*}
[u^{*}]^{2}_{W^{s,2}(\mathbb{S}^{1})} & \propto\sum_{k\in\mathbb{Z}}|k|^{2s}|(u^{*})_{k}|^{2}\\
 & \propto\sum^{\infty}_{k=1}k^{2s}|a_{k}|^{2}<+\infty.
\end{align*}
Thus $u^{*}\in W^{s,2}(\mathbb{S}^{1})$. In particular, by comparing
with \ref{eq:limit-Sr}, we obtain

\noindent
\begin{align*}
[u_{r}-u^{*}]^{2}_{W^{s,2}(\mathbb{S}^{1})} & \propto\sum^{\infty}_{k=1}k^{2s}|a_{k}|^{2}(1-r^{k})^{2}\\
 & \leq\sum^{\infty}_{k=1}k^{2s}|a_{k}|^{2}(1-r^{2k})\\
 & =S(1)-S(r^{2})\\
 & \underset{r\rightarrow1^{-}}{\longrightarrow}0
\end{align*}
which is the desired convergence of the fractional seminorms. This
establishes (2) and (3) for $p=2$ and completes Step 1.

\subsubsection*{General case$\textbf{ \ensuremath{1<p<\infty}.}$}

\noindent We now explain how to adapt the argument to general $1<p<\infty$.
In this regime, the space $W^{s,p}(\mathbb{S}^{1})$ can be identified,
up to equivalent norms, with the Besov space $B^{s}_{p,p}(\mathbb{S}^{1})$
(see, for example, \cite{TriebelBHS,TriebelChar1988}), and the Poisson
semigroup acts as a strongly continuous semigroup on $B^{s}_{p,p}(\mathbb{S}^{1})$.\\
 As sketched in the proof road-map, we begin by establishing the Hardy
space membership and so the existence of the trace on the unit circle.
By assumption \ref{eq:uniform-gagliardo}, 
\[
\sup_{0<r<1}[u_{r}]_{W^{s,p}(\mathbb{S}^{1})}<\infty.
\]
On $\mathbb{S}^{1}$ a fractional Poincaré inequality holds. Since
$\mathbb{S}^{1}$ is a compact Riemannian manifold without boundary,
the inequality follows from the general theory for bounded Lipschitz
domains applied to the universal cover $[-\pi,\pi]$ with periodic
boundary conditions; see, e.g., \cite[Theorem 1.17]{leoni2023first}
for the flat case, which transfers directly to $\mathbb{S}^{1}$ via
the parametrization $\theta\mapsto e^{i\theta}$ and the equivalence
of the kernels noted in the introduction. More precisely, there exists
a constant $\kappa=\kappa(s,p)>0$ such that for every $\xi\in W^{s,p}(\mathbb{S}^{1})$,
\begin{equation}
\|\xi-\overline{\xi}\|_{L^{p}(\mathbb{S}^{1})}\le\kappa[\xi]_{W^{s,p}(\mathbb{S}^{1})},\label{eq:frac-Poincare}
\end{equation}
where $\overline{\xi}$ is the mean value, i.e.
\[
\overline{\xi}:=\frac{1}{2\pi}\int^{2\pi}_{0}\xi(e^{\theta i})d\theta.
\]
Note that \eqref{eq:frac-Poincare} holds for all $0<s<1$ and $1<p<+\infty$
on the compact manifold $\mathbb{S}^{1}$; no condition such as $sp>1$
is required once the mean is subtracted. We apply \eqref{eq:frac-Poincare}
with $\xi=u_{r}$ for each $0<r<1$. Analyticity of $u$ implies that
the mean value of $u$ over inner circles is constant. More precisely,
for every $0<r<1$, 
\[
\overline{u_{r}}=\frac{1}{2\pi}\int^{2\pi}_{0}u(re^{i\theta})d\theta=u(0).
\]
Thus \ref{eq:frac-Poincare} yields 
\[
\|u_{r}-u(0)\|_{L^{p}(\mathbb{S}^{1})}\le\kappa\,[u_{r}]_{W^{s,p}(\mathbb{S}^{1})}.
\]
Combining this with the trivial bound 
\[
\|u_{r}\|_{L^{p}}\le\|u_{r}-u(0)\|_{L^{p}}+(2\pi)^{\frac{1}{p}}|u(0)|,
\]
we obtain 
\[
\|u_{r}\|_{L^{p}(\mathbb{S}^{1})}\le\kappa[u_{r}]_{W^{s,p}(\mathbb{S}^{1})}+(2\pi)^{\frac{1}{p}}|u(0)|\le\kappa\sup_{0<r<1}[u_{r}]_{W^{s,p}}+(2\pi)^{\frac{1}{p}}|u(0)|
\]
for all $0<r<1$. Therefore, $u\in\mathbf{H}^{p}(\mathbb{D})$. More
precisely, 
\begin{equation}
\Vert u\Vert_{\mathbf{H}^{p}(\mathbb{D})}=\sup_{0<r<1}\left(\frac{1}{2\pi}\int^{2\pi}_{0}|u(re^{i\theta})|^{p}\,d\theta\right)^{\frac{1}{p}}<+\infty.\label{eq:Hp-bound}
\end{equation}
In particular, $u$ admits a boundary function $u^{*}\in L^{p}(\mathbb{S}^{1})$
such that 
\[
\lim_{r\to1^{-}}u(re^{i\theta})=u^{*}(e^{i\theta})\quad\text{for almost every }\theta\in(-\pi,\pi),
\]
and the convergence also holds in $L^{p}(\mathbb{S}^{1})$.\\
 \\
 Now, we are going to show that the trace $u^{*}\in W^{s,p}(\mathbb{S}^{1})$.
First, recall that for each $0<r<1$ we can write 
\[
u_{r}(e^{i\theta})=u(re^{i\theta})=(P_{r}*u^{*})(\theta),\qquad\theta\in(-\pi,\pi),
\]
where $P_{r}$ is the Poisson kernel on $\mathbb{S}^{1}$. In fact,
this is the standard Poisson representation of $\mathbf{H}^{p}(\mathbb{D})$
functions (see, e.g., \cite{duren2000theory}). Thus the family $(u_{r})_{0<r<1}$
is exactly the Poisson semigroup applied to $u^{*}$.

By assumption \eqref{eq:uniform-gagliardo} and the bound \eqref{eq:Hp-bound},
the family $(u_{r})_{0<r<1}$ is bounded in $W^{s,p}(\mathbb{S}^{1})$.
Since $1<p<\infty$, the space $W^{s,p}(\mathbb{S}^{1})$ is reflexive
and continuously embedded in $L^{p}(\mathbb{S}^{1})$ (see, for instance,
\cite{BrezisFA,RudinFA}). Fix any sequence $(r_{n})_{n\ge1}$ with
$r_{n}\to1^{-}$ as $n\to\infty$. Then $(u_{r_{n}})_{n\ge1}$ is
a bounded sequence in $W^{s,p}(\mathbb{S}^{1})$, so by reflexivity
and the Eberlein--Šmulian theorem there exist a subsequence $(r_{n_{k}})_{k\ge1}$
and a function $\widetilde{u}\in W^{s,p}(\mathbb{S}^{1})$ such that
\[
u_{r_{n_{k}}}\rightharpoonup\widetilde{u}\quad\text{weakly in }W^{s,p}(\mathbb{S}^{1})\quad\text{as }k\to\infty.
\]
Since the embedding $W^{s,p}(\mathbb{S}^{1})\hookrightarrow L^{p}(\mathbb{S}^{1})$
is continuous, the same subsequence converges weakly in $L^{p}$,
that is, 
\[
u_{r_{n_{k}}}\rightharpoonup\widetilde{u}\quad\text{weakly in }L^{p}(\mathbb{S}^{1}).
\]
On the other hand, the full family $(u_{r})_{0<r<1}$ converges to
$u^{*}$ in $L^{p}(\mathbb{S}^{1})$ as $r\to1^{-}$, hence in particular
\[
u_{r_{n_{k}}}\to u^{*}\quad\text{strongly in }L^{p}(\mathbb{S}^{1})\quad\text{as }k\to\infty.
\]
In a Banach space, strong convergence implies weak convergence with
the same limit. Therefore the weak limit of $u_{r_{n_{k}}}$ in $L^{p}(\mathbb{S}^{1})$
must be $u^{*}$. Comparing with the previous weak convergence, we
conclude that $\widetilde{u}=u^{*}$ almost everywhere on $\mathbb{S}^{1}$.
Since $\widetilde{u}\in W^{s,p}(\mathbb{S}^{1})$, this shows that
$u^{*}$ coincides (as an $L^{p}$-function) with an element of $W^{s,p}(\mathbb{S}^{1})$.
Hence $u^{*}\in W^{s,p}(\mathbb{S}^{1})$, as desired.

The final step now is showing the convergence of the Gagliardo norms.
Recall that on $\mathbb{S}^{1}$, the fractional Sobolev space $W^{s,p}(\mathbb{S}^{1})$
coincides with the Besov space $B^{s}_{p,p}(\mathbb{S}^{1})$ up to
equivalent norms (see \cite{TriebelBHS,TriebelChar1988}); we use
this identification freely below. The Poisson semigroup is strongly
continuous on $B^{s}_{p,p}(\mathbb{S}^{1})$: for every $u\in B^{s}_{p,p}(\mathbb{S}^{1})$,
\[
\|P_{r}*u-u\|_{B^{s}_{p,p}(\mathbb{S}^{1})}\longrightarrow0\quad\text{as }r\to1^{-}.
\]
For this strong continuity we refer to \cite{Bui1984,Kalyabin1988}
(see \cite{TriebelChar1988} for a unified exposition). Since $u^{*}\in W^{s,p}(\mathbb{S}^{1})\simeq B^{s}_{p,p}(\mathbb{S}^{1})$
and $u_{r}=P_{r}*u^{*}$, applying the above with $u=u^{*}$ gives
\[
\|u_{r}-u^{*}\|_{W^{s,p}(\mathbb{S}^{1})}\longrightarrow0\quad\text{as }r\to1^{-}.
\]
By the definition of the $W^{s,p}$-norm (equal to $\|\cdot\|_{L^{p}}+[\cdot]_{W^{s,p}}$)
this yields both 
\[
\|u_{r}-u^{*}\|_{L^{p}(\mathbb{S}^{1})}\to0\quad\text{and}\quad[u_{r}-u^{*}]_{W^{s,p}(\mathbb{S}^{1})}\to0
\]
as $r\to1^{-}$. Using the triangle inequality for the seminorm we
obtain 
\[
\bigl|[u_{r}]_{W^{s,p}(\mathbb{S}^{1})}-[u^{*}]_{W^{s,p}(\mathbb{S}^{1})}\bigr|\le[u_{r}-u^{*}]_{W^{s,p}(\mathbb{S}^{1})}\longrightarrow0,
\]
i.e.\ $[u_{r}]_{W^{s,p}(\mathbb{S}^{1})}\to[u^{*}]_{W^{s,p}(\mathbb{S}^{1})}$
as $r\to1^{-}$. This proves conclusions (2) and (3) of the theorem
for $1<p<+\infty$. Together with step (1), the proof is complete.

\section{The space $\mathcal{W}^{s,p}(\mathbb{D})$ and its norm}

\label{sec:space}

We begin with a lemma that is used repeatedly in the sequel.
\begin{prop}[Poisson contraction of the Gagliardo seminorm]
\label{prop:poisson-contraction} Let $1<p<\infty$, $0<s<1$, and
$f\in W^{s,p}(\mathbb{S}^{1})$. Then for every $r\in(0,1)$, 
\[
[P_{r}*f]_{W^{s,p}(\mathbb{S}^{1})}\le[f]_{W^{s,p}(\mathbb{S}^{1})}.
\]
\end{prop}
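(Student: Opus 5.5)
The plan is to write $P_r*f$ as an average of rotations of $f$ and use the translation invariance of the Gagliardo seminorm on $\mathbb{S}^1$ together with Minkowski's integral inequality. Concretely,
\[
(P_r*f)(e^{i\xi})=\int_{-\pi}^{\pi}P_r(t)\,f(e^{i(\xi-t)})\,\frac{dt}{2\pi},
\]
so for $\xi,\sigma\in(-\pi,\pi)$
\[
(P_r*f)(e^{i\xi})-(P_r*f)(e^{i\sigma})=\int_{-\pi}^{\pi}P_r(t)\bigl(f(e^{i(\xi-t)})-f(e^{i(\sigma-t)})\bigr)\frac{dt}{2\pi}.
\]
The measure $\mu_r:=P_r(t)\,\frac{dt}{2\pi}$ is a probability measure, because $P_r\ge0$ and it has total mass one, as recalled in the introduction.

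Set $\tau_t f(e^{i\xi}):=f(e^{i(\xi-t)})$. Consider the weighted space $L^p\bigl((-\pi,\pi)^2,\,|e^{i\xi}-e^{i\sigma}|^{-1-sp}\,d\xi\,d\sigma\bigr)$ and the function $F_t(\xi,\sigma):=\tau_tf(e^{i\xi})-\tau_tf(e^{i\sigma})$. The Gagliardo seminorm of $P_r*f$ is then the norm of $\int F_t\,d\mu_r(t)$ in this space. Minkowski's integral inequality, valid since $p\ge1$, gives
\[
[P_r*f]_{W^{s,p}(\mathbb{S}^1)}\le\int_{-\pi}^{\pi}[\tau_tf]_{W^{s,p}(\mathbb{S}^1)}\,d\mu_r(t).
\]
Joint measurability of $(t,\xi,\sigma)\mapsto F_t(\xi,\sigma)$ is clear, since $f$ is measurable and the maps involved are compositions with continuous maps.

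Next I check the rotation invariance $[\tau_tf]_{W^{s,p}}=[f]_{W^{s,p}}$. Under the change of variables $(\xi,\sigma)\mapsto(\xi-t,\sigma-t)$ modulo $2\pi$, Lebesgue measure on the torus is preserved. The kernel depends only on $|e^{i\xi}-e^{i\sigma}|=2|\sin\frac{\xi-\sigma}{2}|$, which is invariant under the common shift, including the wrap-around, because the sine expression is $2\pi$-periodic in $\xi-\sigma$ up to sign. Since $\mu_r$ has mass one, the Minkowski bound then gives $[P_r*f]_{W^{s,p}}\le[f]_{W^{s,p}}$.

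The only delicate point is the application of Minkowski's inequality when the seminorm might be infinite or the convolution not absolutely convergent. Here $f\in L^p\subset L^1$, so $P_r*f$ is defined everywhere, as $P_r$ is bounded for fixed $r<1$. The right-hand side is finite by hypothesis, and Minkowski's inequality in the form $\|\int F_t\,d\mu\|\le\int\|F_t\|\,d\mu$ for nonnegative measurable integrands (via Tonelli) covers this case directly. I therefore expect no real obstacle beyond carefully justifying that periodic shifts preserve the kernel.
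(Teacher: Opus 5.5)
Your proof is correct and is essentially the paper's argument: both express $P_r*f$ as a $P_r\,\frac{dt}{2\pi}$-average of rotations of $f$ and combine convexity with rotation invariance. The only difference is that you apply Minkowski's integral inequality to the whole weighted $L^p$ norm, while the paper applies Jensen's inequality pointwise to $|P_r*\Delta_t f|^p$ and then integrates in $\theta$ and $t$.

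One small advantage of your version is that it works directly with the intrinsic kernel $|e^{i\xi}-e^{i\sigma}|^{-1-sp}$. This avoids the paper's detour through the flat kernel $|t|^{-1-sp}$ and the equivalence $|e^{it}-1|\sim|t|$, which read literally would only give the inequality up to a constant. For the joint measurability you assert, it is cleanest to fix a Borel representative of $f$, since a Lebesgue measurable function composed with a continuous map need not be measurable.
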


\begin{proof}
Work in angle coordinates: identify $\mathbb{S}^{1}$ with $(-\pi,\pi)$
and write functions as $2\pi$-periodic functions of $\theta$, with
integration against the normalized measure $\frac{d\theta}{2\pi}$.
Define the angular difference operator 
\[
\Delta_{t}g(\theta):=g(\theta+t)-g(\theta),\qquad t\in(-\pi,\pi).
\]
Since convolution on $\mathbb{S}^{1}$ commutes with angular translation,
\[
\Delta_{t}(P_{r}*f)=P_{r}*(\Delta_{t}f).
\]
The Poisson kernel satisfies $P_{r}\ge0$ and $\int^{\pi}_{-\pi}P_{r}(\theta)\,\frac{d\theta}{2\pi}=1$,
so $P_{r}$ is a probability kernel with respect to $\frac{d\theta}{2\pi}$.
By Jensen's inequality applied to the convex function $t\mapsto|t|^{p}$,
\[
|\Delta_{t}(P_{r}*f)(\theta)|^{p}=|(P_{r}*\Delta_{t}f)(\theta)|^{p}\le(P_{r}*|\Delta_{t}f|^{p})(\theta).
\]
Integrating over $\theta\in(-\pi,\pi)$ against $\frac{d\theta}{2\pi}$
and using $\int P_{r}*g\,\frac{d\theta}{2\pi}=\int g\,\frac{d\theta}{2\pi}$
(total mass one), 
\[
\int^{\pi}_{-\pi}|\Delta_{t}(P_{r}*f)(\theta)|^{p}\,\frac{d\theta}{2\pi}\le\int^{\pi}_{-\pi}|\Delta_{t}f(\theta)|^{p}\,\frac{d\theta}{2\pi}.
\]
Dividing by $|t|^{1+sp}$ and integrating over $t\in(-\pi,\pi)$ gives
$[P_{r}*f]^{p}_{W^{s,p}}\le[f]^{p}_{W^{s,p}}$, where the seminorm
uses the flat-torus form $[g]^{p}_{W^{s,p}}=\int^{\pi}_{-\pi}\int^{\pi}_{-\pi}|g(\theta+t)-g(\theta)|^{p}|t|^{-(1+sp)}\,\frac{d\theta}{2\pi}\,dt$,
equivalent to the intrinsic $\mathbb{S}^{1}$ Gagliardo seminorm via
$|e^{it}-1|\sim|t|$ (see the introduction). Taking $p$-th roots
yields $[P_{r}*f]_{W^{s,p}}\le[f]_{W^{s,p}}$. 
\end{proof}

Theorem~\ref{Thm main result} identifies a natural subspace of the
Hardy space $\mathbf{H}^{p}(\mathbb{D})$. We now study this space
as a Banach space in its own right, describe its norm, and clarify
its relationship to the classical analytic Besov space.
\begin{defn}
\label{def:Wsp} Let $1<p<\infty$ and $0<s<1$. We define 
\[
\mathcal{W}^{s,p}(\mathbb{D}):=\left\{ u:\mathbb{D}\to\mathbb{C}\text{ analytic}\ :\ \sup_{0<r<1}[u_{r}]_{W^{s,p}(\mathbb{S}^{1})}<+\infty\right\} ,
\]
equipped with the norm 
\begin{equation}
\|u\|_{\mathcal{W}^{s,p}}:=|u(0)|+\sup_{0<r<1}[u_{r}]_{W^{s,p}(\mathbb{S}^{1})}.\label{eq:Wsp-norm}
\end{equation}
\end{defn}

Several remarks are in order. The quantity \eqref{eq:Wsp-norm} is
indeed a norm: it is homogeneous and satisfies the triangle inequality,
and $\|u\|_{\mathcal{W}^{s,p}}=0$ implies $[u_{r}]_{W^{s,p}}=0$
for all $r$, hence each $u_{r}$ is constant; combined with $u(0)=0$
this forces $u\equiv0$ by analyticity. The two terms play distinct
roles: $|u(0)|$ anchors the constant mode (the mean value), while
$\sup_{r}[u_{r}]_{W^{s,p}}$ controls oscillation. Together they recover
full $L^{p}$ control via the argument of Theorem~\ref{Thm main result}.

The choice of $u(0)$ as the anchor point is canonical, not arbitrary.
By the mean-value property of analytic functions, $u(0)$ is precisely
the mean of $u_{r}$ over every circle $r\mathbb{S}^{1}$, for every
$0<r<1$. It is therefore the unique interior point whose value is
determined by the boundary oscillation data alone (any other interior
point $z_{0}$ would require knowing $u(z_{0})$ separately). In particular,
$\|u\|_{\mathcal{W}^{s,p}}$ depends on the center of the disc, and
the space $\mathcal{W}^{s,p}(\mathbb{D})$ is naturally associated
with the disc centered at the origin.
\begin{prop}[Banach space structure and embedding]
\label{prop:banach} $\mathcal{W}^{s,p}(\mathbb{D})$ is a Banach
space under \eqref{eq:Wsp-norm}, and the inclusions 
\[
\mathcal{W}^{s,p}(\mathbb{D})\hookrightarrow\mathbf{H}^{p}(\mathbb{D})\hookrightarrow H^{q}(\mathbb{D})\qquad(1\le q\le p<\infty)
\]
are continuous. More precisely, $H_{p}(u)\le C\|u\|_{\mathcal{W}^{s,p}}$
where $C=\kappa+(2\pi)^{\frac{1}{p}}$ and $\kappa$ is the constant
in the fractional Poincaré inequality \eqref{eq:frac-Poincare}. 
\end{prop}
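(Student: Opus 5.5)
The embedding $\mathcal{W}^{s,p}(\mathbb{D})\hookrightarrow\mathbf{H}^{p}(\mathbb{D})$ comes first, since it is just Step 1 of the proof of Theorem~\ref{Thm main result} made quantitative. For $u\in\mathcal{W}^{s,p}(\mathbb{D})$ and $0<r<1$, apply the fractional Poincaré inequality \eqref{eq:frac-Poincare} to $u_{r}$ and use the mean-value identity $\overline{u_{r}}=u(0)$. This gives
\[
\|u_{r}\|_{L^{p}}\le\kappa[u_{r}]_{W^{s,p}}+(2\pi)^{\frac{1}{p}}|u(0)|\le\bigl(\kappa+(2\pi)^{\frac{1}{p}}\bigr)\|u\|_{\mathcal{W}^{s,p}}.
\]
Taking the supremum over $r$ yields $H_{p}(u)\le C\|u\|_{\mathcal{W}^{s,p}}$; the normalization $\frac{d\theta}{2\pi}$ in the Hardy norm only improves the constant. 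The second inclusion $\mathbf{H}^{p}\hookrightarrow H^{q}$ for $q\le p$ follows from Hölder's inequality on each circle with respect to the probability measure $\frac{d\theta}{2\pi}$, which gives $\|u_{r}\|_{L^{q}}\le\|u_{r}\|_{L^{p}}$ with constant $1$.

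For completeness, the norm properties were already noted after Definition~\ref{def:Wsp}. Let $(u_{n})$ be Cauchy in $\mathcal{W}^{s,p}$. By the embedding just proved, $(u_{n})$ is Cauchy in $\mathbf{H}^{p}$, hence it converges in $\mathbf{H}^{p}$ to some analytic $u$. The convergence is also locally uniform on $\mathbb{D}$, since $\mathbf{H}^{p}$ functions satisfy $|f(z)|\le C(1-|z|)^{-1/p}\|f\|_{\mathbf{H}^{p}}$, or more directly via the Cauchy formula on a circle of radius $\rho<1$. In particular $u_{n}(0)\to u(0)$, and for each fixed $r$ we have $(u_{n})_{r}\to u_{r}$ uniformly on $\mathbb{S}^{1}$.

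The main step is to show $\sup_{r}[(u_{n}-u)_{r}]_{W^{s,p}}\to0$. Fix $\varepsilon>0$ and choose $N$ such that $\sup_{r}[(u_{n}-u_{m})_{r}]_{W^{s,p}}<\varepsilon$ for all $n,m\ge N$. For fixed $r$ and fixed $n\ge N$, let $m\to\infty$. The integrand $|g_{m}(\xi)-g_{m}(\sigma)|^{p}/|e^{i\xi}-e^{i\sigma}|^{1+sp}$, with $g_{m}=(u_{n}-u_{m})_{r}$, converges pointwise to the corresponding integrand with $g=(u_{n}-u)_{r}$, because uniform convergence on the circle certainly gives pointwise convergence. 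Fatou's lemma then yields $[(u_{n}-u)_{r}]_{W^{s,p}}\le\liminf_{m}[(u_{n}-u_{m})_{r}]_{W^{s,p}}\le\varepsilon$. This bound is uniform in $r$, so $\sup_{r}[(u_{n}-u)_{r}]\le\varepsilon$, and by the triangle inequality $u\in\mathcal{W}^{s,p}$. Together with $|u_{n}(0)-u(0)|\to0$, this gives $\|u_{n}-u\|_{\mathcal{W}^{s,p}}\to0$.

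I expect the only delicate point to be this Fatou/lower-semicontinuity step. Its role is to avoid any appeal to uniform control near the boundary: the argument works slice by slice, and uniformity in $r$ is inherited from the Cauchy condition, not from convergence at $r=1$.
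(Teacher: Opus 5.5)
Your proposal is correct and follows the paper's proof essentially step for step: the Hardy bound comes from the fractional Poincar\'e inequality together with $\overline{u_{r}}=u(0)$, and completeness comes from passing the Cauchy condition to the limit slice by slice with Fatou's lemma, uniformly in $r$. The differences are cosmetic. You get pointwise convergence on each circle from locally uniform convergence of $\mathbf{H}^{p}$-limits, instead of extracting an a.e.\ convergent subsequence from $L^{p}$ convergence. You also deduce $u\in\mathcal{W}^{s,p}(\mathbb{D})$ from the triangle inequality rather than from a separate Fatou bound.
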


\begin{proof}
The Hardy bound $H_{p}(u)\le C\|u\|_{\mathcal{W}^{s,p}}$ follows
directly from the proof of Step 1 in Theorem~\ref{Thm main result}.
This gives the continuous embedding into $\mathbf{H}^{p}(\mathbb{D})$,
and the inclusion $H^{p}\hookrightarrow H^{q}$ is standard.

For completeness, let $(u^{(n)})$ be a Cauchy sequence in $\mathcal{W}^{s,p}(\mathbb{D})$.
The Hardy bound shows it is Cauchy in $\mathbf{H}^{p}(\mathbb{D})$,
hence converges to some $u\in\mathbf{H}^{p}(\mathbb{D})$. For each
fixed $r$, convergence in $H^{p}$ gives $u^{(n)}_{r}\to u_{r}$
in $L^{p}(\mathbb{S}^{1})$. Pass to a subsequence $(n_{k})$ along
which $u^{(n_{k})}_{r}(\xi)\to u_{r}(\xi)$ for a.e.\ $\xi\in\mathbb{S}^{1}$;
then $|u^{(n_{k})}_{r}(\xi)-u^{(n_{k})}_{r}(\eta)|^{p}\to|u_{r}(\xi)-u_{r}(\eta)|^{p}$
a.e.\ on $\mathbb{S}^{1}\times\mathbb{S}^{1}$. By Fatou's lemma
applied to the Gagliardo double integral, 
\[
[u_{r}]_{W^{s,p}}\le\liminf_{k\to\infty}[u^{(n_{k})}_{r}]_{W^{s,p}}\le\liminf_{n\to\infty}\|u^{(n)}\|_{\mathcal{W}^{s,p}}<\infty,
\]
uniformly in $r$, so $u\in\mathcal{W}^{s,p}(\mathbb{D})$.

It remains to show $\|u^{(n)}-u\|_{\mathcal{W}^{s,p}}\to0$, i.e.\ $\sup_{r}[u^{(n)}_{r}-u_{r}]_{W^{s,p}}\to0$
(the $|u^{(n)}(0)-u(0)|$ term is handled by $H^{p}$ convergence).
Given $\varepsilon>0$, since $(u^{(n)})$ is Cauchy in $\mathcal{W}^{s,p}(\mathbb{D})$,
pick $N$ such that for all $m,n\ge N$, 
\[
\sup_{0<r<1}[u^{(n)}_{r}-u^{(m)}_{r}]_{W^{s,p}}<\varepsilon.
\]
Fix $n\ge N$. For each $r$, $u^{(m)}_{r}\to u_{r}$ in $L^{p}(\mathbb{S}^{1})$
as $m\to\infty$; pass to a subsequence with a.e.\ convergence and
apply Fatou to the Gagliardo integral of the difference to obtain
\[
[u^{(n)}_{r}-u_{r}]_{W^{s,p}}\le\liminf_{m\to\infty}[u^{(n)}_{r}-u^{(m)}_{r}]_{W^{s,p}}\le\varepsilon.
\]
Taking $\sup_{r}$ gives $\sup_{r}[u^{(n)}_{r}-u_{r}]_{W^{s,p}}\le\varepsilon$
for all $n\ge N$. Hence $\|u^{(n)}-u\|_{\mathcal{W}^{s,p}}\to0$. 
\end{proof}

\begin{prop}[Equivalent norm via the boundary trace]
\label{prop:equiv-norm} For $u\in\mathcal{W}^{s,p}(\mathbb{D})$
with boundary trace $u^{*}\in W^{s,p}(\mathbb{S}^{1})$, one has 
\[
\sup_{0<r<1}[u_{r}]_{W^{s,p}(\mathbb{S}^{1})}=[u^{*}]_{W^{s,p}(\mathbb{S}^{1})}.
\]
In particular, the norm \eqref{eq:Wsp-norm} is equivalent to $u\mapsto|u(0)|+[u^{*}]_{W^{s,p}(\mathbb{S}^{1})}$,
and the supremum over $r$ is actually attained in the limit $r\to1^{-}$. 
\end{prop}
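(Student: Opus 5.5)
The plan is to prove the two inequalities separately. We use Theorem~\ref{Thm main result} to obtain the trace $u^{*}$ together with the Poisson representation $u_{r}=P_{r}*u^{*}$. We then use Proposition~\ref{prop:poisson-contraction} for the upper bound, and the seminorm convergence in conclusion (3) of Theorem~\ref{Thm main result} for the lower bound.

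\emph{Upper bound.} Let $u\in\mathcal{W}^{s,p}(\mathbb{D})$. Theorem~\ref{Thm main result} gives $u\in\mathbf{H}^{p}(\mathbb{D})$, $u^{*}\in W^{s,p}(\mathbb{S}^{1})$ and $u_{r}=P_{r}*u^{*}$ for every $0<r<1$. Proposition~\ref{prop:poisson-contraction} applied with $f=u^{*}$ then yields $[u_{r}]_{W^{s,p}}\le[u^{*}]_{W^{s,p}}$ for every $r$, and therefore $\sup_{r}[u_{r}]_{W^{s,p}}\le[u^{*}]_{W^{s,p}}$. One caveat is that the proof of Proposition~\ref{prop:poisson-contraction} is carried out in the flat-torus form of the seminorm, and that form is only equivalent to the intrinsic one. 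To get a genuine equality rather than a comparability, I would redo the Jensen argument directly with the intrinsic kernel $|e^{i\xi}-e^{i\sigma}|^{-1-sp}$. This works because the kernel depends only on $\xi-\sigma$: after substituting $\sigma=\xi+t$, the intrinsic seminorm becomes $\int|\Delta_{t}g|^{p}\,d\theta$ integrated against the weight $|e^{it}-1|^{-1-sp}\,dt$. The Jensen step acts only on the inner $\theta$-integral, for each fixed $t$, so the argument goes through unchanged and gives the contraction with constant $1$ in the intrinsic seminorm.

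\emph{Lower bound and attainment.} Conclusion (3) of Theorem~\ref{Thm main result} gives $[u_{r}]_{W^{s,p}}\to[u^{*}]_{W^{s,p}}$ as $r\to1^{-}$. Hence $[u^{*}]_{W^{s,p}}=\lim_{r\to1^{-}}[u_{r}]_{W^{s,p}}\le\sup_{r}[u_{r}]_{W^{s,p}}$. Combined with the upper bound, this gives equality, and it also shows that the supremum is attained as $r\to1^{-}$. Because conclusion (3) depends on the strong-continuity result, I would also give a self-contained alternative for the lower bound. Since $u_{r}\to u^{*}$ a.e., Fatou's lemma applied to the Gagliardo double integral gives $[u^{*}]_{W^{s,p}}\le\liminf_{r\to1^{-}}[u_{r}]_{W^{s,p}}\le\sup_{r}[u_{r}]_{W^{s,p}}$. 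It is worth noting that the upper bound also follows without the intrinsic-kernel check, because the function $r\mapsto[u_{r}]_{W^{s,p}}$ is nondecreasing. Indeed, $u_{\rho r}=P_{\rho}*u_{r}$ for $\rho<1$, so the contraction, applied to $u_{r}$ rather than $u^{*}$, gives monotonicity; the supremum is therefore a limit, and that limit equals $[u^{*}]_{W^{s,p}}$ by conclusion (3). This variant still requires the contraction with constant $1$ in the intrinsic seminorm. That is why the main obstacle is to check that Proposition~\ref{prop:poisson-contraction} holds with constant exactly $1$ for the seminorm as defined in the introduction, rather than only up to the equivalence constants between kernels.

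\emph{Norm equivalence.} The final claim follows at once: the equality shows that $|u(0)|+\sup_{r}[u_{r}]_{W^{s,p}}$ and $|u(0)|+[u^{*}]_{W^{s,p}}$ are in fact equal.
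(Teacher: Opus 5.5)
Your proposal is correct and follows the paper's own argument. The Poisson contraction applied to $u^{*}$ (via $u_{r}=P_{r}*u^{*}$) gives $\sup_{r}[u_{r}]_{W^{s,p}}\le[u^{*}]_{W^{s,p}}$, and conclusion (3) of Theorem~\ref{Thm main result} gives the reverse inequality together with attainment as $r\to1^{-}$. Two refinements you add are valid and worth keeping: the Jensen step works verbatim for the translation-invariant intrinsic kernel $|e^{it}-1|^{-1-sp}$, giving contraction with constant exactly $1$, which the claimed equality needs and which the proof of Proposition~\ref{prop:poisson-contraction} only asserts up to kernel equivalence; and your Fatou argument gives an alternative lower bound that avoids strong continuity.
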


\begin{proof}
By conclusion (3) of Theorem~\ref{Thm main result}, $[u_{r}]_{W^{s,p}}\to[u^{*}]_{W^{s,p}}$.
Since $[u_{r}]_{W^{s,p}}\le\sup_{r}[u_{r}]_{W^{s,p}}$, passing to
the limit gives $[u^{*}]_{W^{s,p}}\le\sup_{r}[u_{r}]_{W^{s,p}}$.
For the reverse, $u_{r}=P_{r}*u^{*}$ and the Poisson kernel acts
as a contraction on $W^{s,p}(\mathbb{S}^{1})$ (convolution with an
approximate identity of total mass one does not increase the Gagliardo
seminorm), so $[u_{r}]_{W^{s,p}}\le[u^{*}]_{W^{s,p}}$ for all $r\in(0,1)$.
Taking the supremum gives the reverse inequality, and the two combine
to give equality. 
\end{proof}

\begin{thm}[Isomorphism with the analytic Besov space]
\label{thm:besov-iso} Let $1<p<\infty$ and $0<s<1$. The trace
map $u\mapsto u^{*}$ is a surjective isomorphism of Banach spaces
\[
\mathcal{W}^{s,p}(\mathbb{D})\xrightarrow{\ \sim\ }B^{s}_{p,p,+}(\mathbb{S}^{1}),
\]
where $B^{s}_{p,p,+}(\mathbb{S}^{1}):=\{f\in B^{s}_{p,p}(\mathbb{S}^{1}):\hat{f}(k)=0\text{ for all }k<0\}$
is the analytic Besov space endowed with the norm $\|f\|_{B^{s}_{p,p}}=\|f\|_{L^{p}}+[f]_{W^{s,p}}$.
The inverse is Poisson extension $g\mapsto P[g]$. Moreover, the norms
satisfy 
\begin{equation}
\|u\|_{\mathcal{W}^{s,p}}\le\|u^{*}\|_{W^{s,p}(\mathbb{S}^{1})}\le(1+C)\|u\|_{\mathcal{W}^{s,p}},\label{eq:norm-equiv}
\end{equation}
where $C=\kappa+(2\pi)^{\frac{1}{p}}$ is the constant from Proposition~\ref{prop:banach}. 
\end{thm}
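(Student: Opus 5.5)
The argument has four parts: well-definedness of the trace map into $B^{s}_{p,p,+}(\mathbb{S}^{1})$, the two norm inequalities in \eqref{eq:norm-equiv}, surjectivity via Poisson extension, and identification of the inverse.

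\textbf{Well-definedness.} Let $u\in\mathcal{W}^{s,p}(\mathbb{D})$. By Theorem~\ref{Thm main result}, $u\in\mathbf{H}^{p}(\mathbb{D})$ and $u^{*}\in W^{s,p}(\mathbb{S}^{1})\simeq B^{s}_{p,p}(\mathbb{S}^{1})$. Since $u_{r}\to u^{*}$ in $L^{p}$, the Fourier coefficients pass to the limit: $\widehat{u^{*}}(k)=\lim_{r\to1^-}\widehat{u_{r}}(k)$. For $k<0$ every $\widehat{u_{r}}(k)$ vanishes, because $u_{r}$ is a power series in $e^{i\theta}$. Hence $\widehat{u^{*}}(k)=0$ for $k<0$, so $u^{*}\in B^{s}_{p,p,+}$. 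The map is clearly linear.

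\textbf{Norm equivalence.} We use the paper's normalization $\|f\|_{L^{p}}^{p}=\frac{1}{2\pi}\int|f|^{p}\,d\theta$, under which $\|u^{*}\|_{L^{p}}=\|u\|_{\mathbf{H}^{p}(\mathbb{D})}$.
\begin{itemize}
\item For the left inequality, apply the mean-value property at $r\to1^-$: $u(0)=\widehat{u^{*}}(0)$, so $|u(0)|\le\|u^{*}\|_{L^{1}}\le\|u^{*}\|_{L^{p}}$ by H\"older on a probability space. Proposition~\ref{prop:equiv-norm} gives $\sup_{r}[u_{r}]=[u^{*}]$. Adding these yields $\|u\|_{\mathcal{W}^{s,p}}\le\|u^{*}\|_{W^{s,p}}$.
\item For the right inequality, $\|u^{*}\|_{L^{p}}=\|u\|_{\mathbf{H}^{p}}\le C\|u\|_{\mathcal{W}^{s,p}}$ by Proposition~\ref{prop:banach}. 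Together with $[u^{*}]\le\|u\|_{\mathcal{W}^{s,p}}$ this gives the factor $1+C$.
\end{itemize}
These two bounds already show that the trace map is injective, bounded, and has closed range.

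\textbf{Surjectivity.} Take $g\in B^{s}_{p,p,+}(\mathbb{S}^{1})$ and set $u:=P[g]$, so $u_{r}=P_{r}*g$. Since $\hat g(k)=0$ for $k<0$, we have $u(re^{i\theta})=\sum_{k\ge0}\hat g(k)r^{k}e^{ik\theta}$. Because $g\in L^{1}$, the coefficients are bounded, so this series converges locally uniformly and $u$ is analytic on $\mathbb{D}$.

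By Proposition~\ref{prop:poisson-contraction}, $[u_{r}]_{W^{s,p}}\le[g]_{W^{s,p}}$ for every $r$, hence $u\in\mathcal{W}^{s,p}(\mathbb{D})$.

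It remains to identify the trace as $g$. Since $P_{r}*g\to g$ in $L^{p}$ for $g\in L^{p}$, and $u_{r}\to u^{*}$ in $L^{p}$ by Theorem~\ref{Thm main result}, uniqueness of $L^{p}$ limits gives $u^{*}=g$. Thus the Poisson extension is a right inverse of the trace; since the trace is injective, it is the two-sided inverse.

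\textbf{Main difficulty.} The steps are essentially bookkeeping on top of Theorem~\ref{Thm main result} and Propositions~\ref{prop:poisson-contraction}--\ref{prop:equiv-norm}. The one place needing care is the left inequality in \eqref{eq:norm-equiv}: it requires $|u(0)|\le\|u^{*}\|_{L^{p}}$, which holds only because the $L^{p}$ norm is taken with respect to the normalized measure $\frac{d\theta}{2\pi}$. Under the unnormalized measure, a factor $(2\pi)^{-1/p}$ would appear and only change the constants. The same normalization must be used consistently with the constant $C$ coming from Proposition~\ref{prop:banach}.
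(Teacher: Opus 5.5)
Your proposal is correct and follows essentially the paper's route. Poisson contraction (Proposition~\ref{prop:poisson-contraction}) gives surjectivity, Proposition~\ref{prop:equiv-norm} with the mean-value bound $|u(0)|\le\|u^{*}\|_{L^{p}}$ gives the left inequality in \eqref{eq:norm-equiv}, and the Hardy bound of Proposition~\ref{prop:banach} gives the right one. The differences are minor: you get injectivity from the lower norm bound rather than from $u=P[u^{*}]$, and you make explicit that $\widehat{u^{*}}(k)=0$ for $k<0$ and that $P[g]$ has trace $g$, which the paper takes for granted.

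One small correction to your closing remark: $|u(0)|\le\|u^{*}\|_{L^{p}}$ does not depend on using the normalized measure. With $d\theta$, H\"older gives $|u(0)|\le(2\pi)^{-1/p}\|u^{*}\|_{L^{p}(d\theta)}$, which is even stronger, so no constants change.
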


\begin{proof}
\textbf{Injectivity.} If $u^{*}=0$ then $u=P[u^{*}]=0$, so the trace
map is injective.

\textbf{Surjectivity.} For any $g\in B^{s}_{p,p,+}(\mathbb{S}^{1})$,
set $u=P[g]$. Then $u$ is analytic in $\mathbb{D}$ with $u^{*}=g$.
Since convolution with the Poisson kernel $P_{r}$ (an approximate
identity of total mass one) does not increase the Gagliardo seminorm,
$[u_{r}]_{W^{s,p}}=[P_{r}*g]_{W^{s,p}}\le[g]_{W^{s,p}}$ for all $r$,
so $\sup_{r}[u_{r}]_{W^{s,p}}\le[g]_{W^{s,p}}<\infty$ and $u\in\mathcal{W}^{s,p}(\mathbb{D})$.

\textbf{Norm equivalence.} By Proposition~\ref{prop:equiv-norm},
$\sup_{r}[u_{r}]_{W^{s,p}}=[u^{*}]_{W^{s,p}}$, so 
\[
\|u\|_{\mathcal{W}^{s,p}}=|u(0)|+[u^{*}]_{W^{s,p}}.
\]
Since $u(0)=\frac{1}{2\pi}\int u^{*}\,d\theta$ by the mean-value
property, $|u(0)|\le\|u^{*}\|_{L^{p}}\le\|u^{*}\|_{W^{s,p}}$, giving
the upper bound in \eqref{eq:norm-equiv}. For the lower bound, $[u^{*}]_{W^{s,p}}\le\|u^{*}\|_{W^{s,p}}$
and $\|u^{*}\|_{L^{p}}\le H_{p}(u)\le C\|u\|_{\mathcal{W}^{s,p}}$
by Proposition~\ref{prop:banach}, so 
\[
\|u^{*}\|_{W^{s,p}}=\|u^{*}\|_{L^{p}}+[u^{*}]_{W^{s,p}}\le C\|u\|_{\mathcal{W}^{s,p}}+\|u\|_{\mathcal{W}^{s,p}}=(1+C)\|u\|_{\mathcal{W}^{s,p}}.\qedhere
\]
\end{proof}

\begin{rem}
Theorem~\ref{thm:besov-iso} confirms that $\mathcal{W}^{s,p}(\mathbb{D})$
and $B^{s}_{p,p,+}(\mathbb{S}^{1})$ are the same Banach space described
from two different perspectives: the former by interior seminorm data,
the latter by boundary regularity. The non-trivial direction ---
that interior seminorm control implies $u\in B^{s}_{p,p,+}$ ---
is precisely Theorem~\ref{Thm main result}. The identification itself
cannot be used to dismiss that theorem as circular: the set equality
of the two spaces is a \emph{consequence} of the theorem, not a prerequisite
for it. 
\end{rem}

\begin{rem}[Interpolation scale]
Since the trace isomorphism identifies $\mathcal{W}^{s,p}(\mathbb{D})$
with $B^{s}_{p,p,+}$, the interpolation theory of Besov spaces (see
\cite{TriebelBHS}, Chapter 2) gives 
\[
\left[\mathcal{W}^{s_{0},p}(\mathbb{D}),\,\mathcal{W}^{s_{1},p}(\mathbb{D})\right]_{\theta}\simeq\mathcal{W}^{s_{\theta},p}(\mathbb{D}),\qquad s_{\theta}=(1-\theta)s_{0}+\theta s_{1},
\]
for $0<s_{0}<s_{1}<1$ and $\theta\in(0,1)$ (complex interpolation
in the sense of Calderón). The scale $(\mathcal{W}^{s,p}(\mathbb{D}))_{0<s<1}$
interpolates between $\mathbf{H}^{p}(\mathbb{D})$ at $s=0$ and the
space of analytic functions with boundary trace in $W^{1,p}(\mathbb{S}^{1})$
at $s=1$. 
\end{rem}

\section{Applications}

\label{sec:applications}

We present five applications of the main theorem. The first two are
direct corollaries. The third gives a new sufficient condition for
boundary regularity of semilinear equations. The fourth is the most
structurally significant: for analytic functions, it provides an alternative
seminorm-only hypothesis, different in character from the classical
trace theorem, yielding the same boundary conclusion. The fifth gives
a regularity criterion for value functions in stochastic control.
Explicit examples verifying the seminorm condition are collected in
Section~\ref{sec:examples}.

Throughout this section, $1<p<\infty$ and $0<s<1$.

\subsection{Boundary regularity for harmonic functions}
\begin{thm}[Seminorm criterion for harmonic functions]
\label{thm:harmonic-bdy} Let $v:\mathbb{D}\to\mathbb{R}$ be harmonic.
Suppose that 
\begin{equation}
\sup_{0<r<1}[v_{r}]_{W^{s,p}(\mathbb{S}^{1})}<+\infty.\label{eq:harmonic-seminorm}
\end{equation}
Then: 
\begin{enumerate}
\item $v\in\mathbf{h}^{p}(\mathbb{D})$ (harmonic Hardy space) and $v$
admits a radial boundary trace $v^{*}\in L^{p}(\mathbb{S}^{1})$. 
\item $v^{*}\in W^{s,p}(\mathbb{S}^{1})$. 
\item $[v_{r}]_{W^{s,p}(\mathbb{S}^{1})}\to[v^{*}]_{W^{s,p}(\mathbb{S}^{1})}$
as $r\to1^{-}$. 
\end{enumerate}
\end{thm}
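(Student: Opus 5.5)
The plan is to follow the three-step scheme used for Theorem~\ref{Thm main result}. The analytic structure there was used in only two places: the mean-value property, which pins $\overline{u_r}$, and the Poisson representation $u_r=P_r*u^*$. Both remain valid for real harmonic functions, so the argument should transfer. One could instead pass to an analytic completion $v+i\tilde v$, but controlling the conjugate $\tilde v$ would require boundedness of the conjugation operator on $W^{s,p}$. I will avoid that and argue directly with $v$.

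\textbf{Step 1 (Hardy membership).} For harmonic $v$, the mean-value property gives $\overline{v_r}=\frac{1}{2\pi}\int_0^{2\pi}v(re^{i\theta})\,d\theta=v(0)$ for every $0<r<1$. Applying the fractional Poincar\'e inequality \eqref{eq:frac-Poincare} to $v_r$ yields
\[
\|v_r\|_{L^p}\le\kappa\sup_{0<\rho<1}[v_\rho]_{W^{s,p}}+(2\pi)^{1/p}|v(0)|
\]
uniformly in $r$, so $v\in\mathbf{h}^p(\mathbb{D})$. Since $1<p<\infty$, the classical theory of harmonic Hardy spaces (Duren, Rudin) gives a boundary function $v^*\in L^p(\mathbb{S}^1)$ with $v=P[v^*]$, that is, $v_r=P_r*v^*$. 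It also gives nontangential, hence radial, a.e.\ convergence and convergence in $L^p$. Here $p>1$ is essential: for $p=1$ only a measure would be obtained. This proves (1).

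\textbf{Step 2 (boundary regularity).} The family $(v_r)$ is bounded in $W^{s,p}(\mathbb{S}^1)$ by Step 1 and hypothesis \eqref{eq:harmonic-seminorm}. I will repeat the weak-compactness argument verbatim: reflexivity gives weakly convergent subsequences $v_{r_{n_k}}\rightharpoonup\tilde v$ in $W^{s,p}$, hence also in $L^p$, while $v_{r}\to v^*$ strongly in $L^p$. It follows that $\tilde v=v^*$, so $v^*\in W^{s,p}(\mathbb{S}^1)$. A cheaper alternative is to use Fatou's lemma on the Gagliardo double integral along an a.e.-convergent subsequence, which gives $[v^*]_{W^{s,p}}\le\liminf[v_{r}]_{W^{s,p}}<\infty$ directly, with no reflexivity needed. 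I would use this Fatou argument as the main route.

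\textbf{Step 3 (convergence).} Since $v_r=P_r*v^*$ with $v^*\in W^{s,p}\simeq B^s_{p,p}$, Theorem~\ref{fractional Poisson} gives $\|v_r-v^*\|_{W^{s,p}}\to0$. The reverse triangle inequality for the seminorm then yields (3). Alternatively, Proposition~\ref{prop:poisson-contraction} gives $\limsup_r[v_r]\le[v^*]$, and the Fatou bound from Step 2 gives $\liminf_r[v_r]\ge[v^*]$. This proves (3) without invoking the Besov machinery.

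The main obstacle is Step 1's reliance on the Poisson representation $v_r=P_r*v^*$ for $v\in\mathbf{h}^p$. This is standard for $p>1$ via weak-$*$ compactness of $(v_r)$ in $L^p$ and uniqueness of the harmonic extension. Everything else is a direct transcription of the analytic proof, since that proof never used the vanishing of negative Fourier modes.
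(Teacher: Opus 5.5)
Your argument is correct, but it takes a different route from the paper's. The paper uses exactly the reduction you chose to avoid. It forms $u=v+i\tilde v$ with $\tilde v(0)=0$. It then invokes boundedness of conjugation on $W^{s,p}(\mathbb{S}^1)\simeq B^s_{p,p}(\mathbb{S}^1)$ to get $[\tilde v_r]_{W^{s,p}}\le C_p[v_r]_{W^{s,p}}$, applies Theorem~\ref{Thm main result} to $u$ as a black box, and takes real parts.

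You instead rerun the proof for $v$ itself, noting that only the mean-value property and the Poisson representation were ever used, and both hold for real harmonic functions. You also replace reflexivity and the Besov strong continuity (Theorem~\ref{fractional Poisson}) by two elementary tools: Fatou's lemma on the Gagliardo integral, and the contraction of Proposition~\ref{prop:poisson-contraction}.

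What your route buys:
\begin{itemize}
\item It is more elementary and self-contained.
\item It shows that harmonicity, not analyticity, is what drives Theorem~\ref{Thm main result}.
\item The Fatou--contraction squeeze gives exactly conclusion (3) with no Besov input.
\end{itemize}

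What the paper's reduction buys:
\begin{itemize}
\item It reuses the analytic theorem directly.
\item Through its Besov-based Step~3, it gives norm convergence $v_r\to v^*$ in $W^{s,p}$, and $W^{s,p}$ regularity of the conjugate trace as a by-product.
\end{itemize}
Its price, the conjugation estimate, is actually cheap. Conjugation commutes with the differences $\Delta_t$, so M.~Riesz's theorem gives $\|\Delta_t\tilde f\|_{L^p}\le C_p\|\Delta_t f\|_{L^p}$ for each $t$, and integrating against the kernel yields the seminorm bound.

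One small point in your Step~3. You need $\liminf_{r\to1^-}[v_r]_{W^{s,p}}\ge[v^*]_{W^{s,p}}$ for the whole family, not just along one subsequence. To get it, use that radial limits exist almost everywhere, so Fatou's lemma applies along every sequence $r_n\to1^-$.
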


\begin{proof}
Let $u=v+i\tilde{v}$ be the analytic completion of $v$, where $\tilde{v}$
is the harmonic conjugate normalized by $\tilde{v}(0)=0$. Then $u$
is analytic in $\mathbb{D}$ and $v=\mathrm{Re}(u)$.

\textbf{Step 1: Reduce to Theorem~\ref{Thm main result}.} Since
$u_{r}=v_{r}+i\tilde{v}_{r}$, the Gagliardo seminorm satisfies, for
$1\le p<\infty$, 
\[
[u_{r}]^{p}_{W^{s,p}}\le2^{p-1}\left([v_{r}]^{p}_{W^{s,p}}+[\tilde{v}_{r}]^{p}_{W^{s,p}}\right),
\]
using $|a+ib|^{p}\le2^{p-1}(|a|^{p}+|b|^{p})$ inside the double integral.
The harmonic conjugate operator on $\mathbb{S}^{1}$ is the Hilbert
transform, which is bounded on $W^{s,p}(\mathbb{S}^{1})$ for $1<p<\infty$:
since $W^{s,p}(\mathbb{S}^{1})\simeq B^{s}_{p,p}(\mathbb{S}^{1})$
and the Hilbert transform is a Calderón--Zygmund operator on the
torus, its boundedness on $B^{s}_{p,p}(\mathbb{S}^{1})$ is a standard
consequence of the general theory of CZ operators on Besov and Triebel--Lizorkin
spaces (see \cite{TriebelBHS}, Chapter~2). Hence there exists $C_{p}>0$
depending only on $p$ such that 
\[
[\tilde{v}_{r}]_{W^{s,p}(\mathbb{S}^{1})}\le C_{p}[v_{r}]_{W^{s,p}(\mathbb{S}^{1})}.
\]
Combining, 
\[
\sup_{0<r<1}[u_{r}]_{W^{s,p}}\le2^{1-\frac{1}{p}}(1+C_{p})\sup_{0<r<1}[v_{r}]_{W^{s,p}}<\infty.
\]

\textbf{Step 2: Apply Theorem~\ref{Thm main result}.} The bound
above shows $u\in\mathcal{W}^{s,p}(\mathbb{D})$, so Theorem~\ref{Thm main result}
applies. We obtain $u\in\mathbf{H}^{p}(\mathbb{D})$ with $u^{*}\in W^{s,p}(\mathbb{S}^{1})$
and $[u_{r}]_{W^{s,p}}\to[u^{*}]_{W^{s,p}}$.

\textbf{Step 3: Pass conclusions to $v$.} Since $v=\mathrm{Re}(u)$: 
\begin{enumerate}
\item $v\in\mathbf{H}^{p}(\mathbb{D})$ with $v^{*}=\mathrm{Re}(u^{*})\in L^{p}(\mathbb{S}^{1})$. 
\item $v^{*}=\mathrm{Re}(u^{*})\in W^{s,p}(\mathbb{S}^{1})$ since $W^{s,p}(\mathbb{S}^{1})$
is stable under taking real parts. 
\item By Theorem~\ref{Thm main result} conclusion (3), $u_{r}\to u^{*}$
strongly in $W^{s,p}(\mathbb{S}^{1})$ as $r\to1^{-}$. The map $f\mapsto\mathrm{Re}(f)$
is a bounded linear operator on $W^{s,p}(\mathbb{S}^{1})$, so $v_{r}=\mathrm{Re}(u_{r})\to\mathrm{Re}(u^{*})=v^{*}$
strongly in $W^{s,p}(\mathbb{S}^{1})$. In particular $[v_{r}-v^{*}]_{W^{s,p}}\to0$,
and by the triangle inequality $[v_{r}]_{W^{s,p}}\to[v^{*}]_{W^{s,p}}$.\qedhere 
\end{enumerate}
\end{proof}

\begin{rem}
Theorem~\ref{thm:harmonic-bdy} reverses the classical logic of the
Dirichlet problem: rather than prescribing $g\in W^{s,p}(\mathbb{S}^{1})$
and forming the Poisson extension $v=P[g]$, we start from an interior
harmonic function with controlled oscillation and deduce that its
boundary data must lie in $W^{s,p}(\mathbb{S}^{1})$. This is useful
when $v$ is constructed variationally or by approximation and $L^{p}$
bounds are not immediately available. 
\end{rem}

\subsection{Comparison with the classical trace theorem}

The classical trace theorem states that $u\in W^{s+\frac{1}{p},\,p}(\mathbb{D})$
implies $u|_{\mathbb{S}^{1}}\in W^{s,p}(\mathbb{S}^{1})$, with no
analyticity required. The following proposition records that condition
(B) of the main theorem yields the same trace conclusion for analytic
functions, from a hypothesis that is genuinely different in character:
it controls only the boundary oscillation of the radial restrictions,
with no reference to interior fractional derivatives.
\begin{prop}[Alternative trace criterion for analytic functions]
\label{prop:trace-comparison} Let $u:\mathbb{D}\to\mathbb{C}$ be
analytic. Each of the following conditions implies $u^{*}\in W^{s,p}(\mathbb{S}^{1})$: 
\begin{enumerate}
\item[(A)] $u\in W^{s+\frac{1}{p},\,p}(\mathbb{D})$ \quad{}\emph{(classical
interior condition)}, 
\item[(B)] ${\displaystyle \sup_{0<r<1}[u_{r}]_{W^{s,p}(\mathbb{S}^{1})}<+\infty}$
\quad{}\emph{(seminorm-only condition)}. 
\end{enumerate}
The two conditions are different in character: (A) controls interior
fractional derivatives including in the radial direction, while (B)
controls only the tangential oscillation on each circle separately,
with no coupling between different radii. Neither condition obviously
implies the other. 
\end{prop}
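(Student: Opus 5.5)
The two implications are independent, so I will prove them separately. The comparison remarks at the end of the statement are informal and need no separate proof.

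\textbf{Condition (B).} This is immediate from Theorem~\ref{Thm main result}. If $\sup_{r}[u_{r}]_{W^{s,p}}<\infty$, conclusion (1) gives $u\in\mathbf{H}^{p}(\mathbb{D})$ with a radial trace $u^{*}$. Conclusion (2) gives $u^{*}\in W^{s,p}(\mathbb{S}^{1})$. Nothing else is needed.

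\textbf{Condition (A).} I plan to use the classical trace theorem for the Lipschitz (indeed smooth) domain $\mathbb{D}$. Since $0<s<1$, the index $s+\frac1p$ exceeds $\frac1p$, so the trace operator $\operatorname{Tr}:W^{s+\frac1p,p}(\mathbb{D})\to W^{s,p}(\mathbb{S}^{1})$ is bounded. This holds for $s+\frac1p<1$ via the Gagliardo form, and for $s+\frac1p\ge1$ via the Besov form $B^{s}_{p,p}\simeq W^{s,p}$, which covers all $0<s<1$ in any case. It remains to show that $\operatorname{Tr}u$ coincides with the radial limit $u^{*}$.

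\textbf{Identifying the trace with $u^{*}$.} This is the main obstacle. First, $u\in W^{s+\frac1p,p}(\mathbb{D})\subset L^{p}(\mathbb{D})$. I would argue as follows.

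\begin{enumerate}
\item Pass to dilations. The functions $u_{\rho}(z):=u(\rho z)$ are smooth on $\overline{\mathbb{D}}$, so $\operatorname{Tr}u_{\rho}=u_{\rho}|_{\mathbb{S}^{1}}$ holds trivially.
\item Show $u_{\rho}\to u$ in $W^{s+\frac1p,p}(\mathbb{D})$ as $\rho\to1^{-}$. This uses continuity of dilations on fractional Sobolev spaces: approximate $u$ by functions in $C^{\infty}(\overline{\mathbb{D}})$, for which dilation converges, and note that the dilation operators are uniformly bounded for $\rho$ near $1$, which one checks on the Gagliardo integral by a change of variables.
\item Conclude from continuity of $\operatorname{Tr}$ that $u_{\rho}|_{\mathbb{S}^{1}}\to\operatorname{Tr}u$ in $W^{s,p}(\mathbb{S}^{1})$, hence in $L^{p}(\mathbb{S}^{1})$.
\item In particular $\sup_{\rho}\|u_{\rho}\|_{L^{p}(\mathbb{S}^{1})}<\infty$, so $u\in\mathbf{H}^{p}(\mathbb{D})$. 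By the classical Hardy theory, $u_{\rho}\to u^{*}$ in $L^{p}(\mathbb{S}^{1})$.
\item Uniqueness of $L^{p}$ limits gives $u^{*}=\operatorname{Tr}u\in W^{s,p}(\mathbb{S}^{1})$.
\end{enumerate}

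As a fallback for step 2, I could avoid dilation continuity altogether. The trace inequality applied to the smooth $u_{\rho}$ gives
\[
\|u_{\rho}\|_{W^{s,p}(\mathbb{S}^{1})}\le C\|u_{\rho}\|_{W^{s+\frac1p,p}(\mathbb{D})}\le C'\|u\|_{W^{s+\frac1p,p}(\mathbb{D})},
\]
so $\sup_{\rho}[u_{\rho}]_{W^{s,p}(\mathbb{S}^{1})}<\infty$. This is exactly condition (B), and the first part of the proof then applies. The only point to verify here is the uniform boundedness of dilations on $W^{s+\frac1p,p}(\mathbb{D})$ for $\rho$ close to $1$. The hard part will be this routine but careful estimate for the dilated Gagliardo integral (and for derivatives when $s+\frac1p\ge1$), since the domain shrinks to $\rho^{-1}$-preimages.
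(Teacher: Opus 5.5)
\textbf{Verdict.} Your proof is correct. For (B) it is exactly the paper's argument, namely conclusion (2) of Theorem~\ref{Thm main result}.

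\textbf{Condition (A).} Here your route differs from the paper's in a substantive way. The paper only writes ``this is the classical trace theorem'' and stops. It never checks that a $u$ satisfying (A) has a radial limit $u^{*}$ at all, nor that this limit coincides with the Sobolev trace $\operatorname{Tr}u$, which is what the statement is about. Your dilation argument supplies that missing link. Change of variables gives $[u(\rho\,\cdot)]^{p}_{W^{t,p}(\mathbb{D})}\le\rho^{tp-2}[u]^{p}_{W^{t,p}(\mathbb{D})}$ with $t=s+\frac1p$, and the analogous bound for $\nabla u$ when $t\ge 1$. Together with density of $C^{\infty}(\overline{\mathbb{D}})$, this yields $u(\rho\,\cdot)\to u$ in $W^{t,p}(\mathbb{D})$. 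Continuity of $\operatorname{Tr}$ then gives $u_{\rho}\to\operatorname{Tr}u$ in $L^{p}(\mathbb{S}^{1})$, hence $u\in\mathbf{H}^{p}(\mathbb{D})$ and $u^{*}=\operatorname{Tr}u$.

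One small detail: the limit argument only bounds $\|u_{\rho}\|_{L^{p}}$, and in your fallback $[u_{\rho}]_{W^{s,p}}$, for $\rho$ near $1$. Small $\rho$ is trivial, either by monotonicity of integral means (or $u_{r_1}=P_{r_1/r_2}*u_{r_2}$ plus Poisson contraction), or because $u$ is Lipschitz on $\frac12\overline{\mathbb{D}}$.

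\textbf{Your fallback.} This is the more economical route. It avoids identifying traces altogether by showing directly that (A) implies (B) for analytic $u$, and then invoking the main theorem. The price is that it rests on the full machinery of Theorem~\ref{Thm main result}: fractional Poincar\'e, weak compactness, and strong continuity of the Poisson semigroup. Your primary route uses only the classical trace theorem and standard Hardy space theory, so it keeps the two implications genuinely independent.

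\textbf{On the comparison remarks.} Your fallback undercuts the paper's informal claim that (A) ``does not obviously force'' (B). For analytic functions it does, via the trace inequality applied to the smooth dilates plus the uniform dilation bound, in a few lines. You were right to treat those comparison remarks as non-mathematical, but your own argument shows they should not be taken at face value.
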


\begin{proof}
\textbf{(A) implies $u^{*}\in W^{s,p}$.} This is the classical trace
theorem; see \cite{leoni2023first}. No analyticity is used.

\textbf{(B) implies $u^{*}\in W^{s,p}$.} This is Theorem~\ref{Thm main result},
conclusion (2).

\textbf{Why neither condition obviously implies the other.} Condition
(A) involves a mixed radial-tangential seminorm --- coupling the
behaviour of $u_{r}$ across different radii $r\ne\rho$ --- not
a supremum over radial slices; so (A) does not obviously force (B).
Conversely, condition (B) implies $u\in\mathbf{H}^{p}(\mathbb{D})$
by Theorem~\ref{Thm main result}, hence $\|u\|_{L^{p}(\mathbb{D})}<\infty$,
but Hardy membership does not control the radial-variation terms in
$\|u\|_{W^{s+\frac{1}{p},p}(\mathbb{D})}$; so (B) does not obviously
force (A). For instance, for $p=2$ and $s\in(0,1/2)$, the function
$u(z)=\sum^{\infty}_{k=1}k^{-s-1/2}(\log k)^{-\beta}z^{k}$ with $\beta\in(1/2,1)$
satisfies (B): 
\[
\sup_{0<r<1}[u_{r}]^{2}_{W^{s,2}}\le\sum^{\infty}_{k=1}k^{-1}(\log k)^{-2\beta}<\infty,
\]
since $2\beta>1$. Whether this $u$ satisfies (A) depends on the
radial-variation term, which we do not compute here. 
\end{proof}

\begin{rem}
\label{rem:trace-gap} The conceptual difference between (A) and (B)
is the following. Condition (A) requires $u$ to have $s+\frac{1}{p}$
fractional derivatives in $L^{p}$ globally across $\mathbb{D}$,
including across the radial direction. Condition (B) requires only
that the Gagliardo oscillation of each radial slice $u_{r}$ is uniformly
bounded; no coupling between different radii is imposed. For analytic
functions, the mean-value property ties the $L^{p}$ size of each
$u_{r}$ to $u(0)$ via the fractional Poincaré inequality, which
is why (B) still forces the trace conclusion. This is the mechanism
that has no analogue for non-analytic functions and that makes (B)
a natural alternative hypothesis in the analytic setting. 
\end{rem}


\subsection{Boundary regularity for semilinear equations}

\begin{thm}[Boundary regularity for semilinear equations]
\label{thm:semilinear} Let $F:\mathbb{C}\to\mathbb{C}$ be real-analytic,
and let $u:\mathbb{D}\to\mathbb{C}$ be a solution of 
\begin{equation}
\Delta u=F(u)\quad\text{in }\mathbb{D},\label{eq:semilinear}
\end{equation}
which extends real-analytically to an open neighborhood of $\overline{\mathbb{D}}$.
Suppose that 
\begin{equation}
\sup_{0<r<1}[u_{r}]_{W^{s,p}(\mathbb{S}^{1})}<+\infty.\label{eq:semilinear-hyp}
\end{equation}
Then $\sup_{0<r<1}\|u_{r}\|_{L^{p}(\mathbb{S}^{1})}<\infty$, $u^{*}\in W^{s,p}(\mathbb{S}^{1})$,
and $[u_{r}]_{W^{s,p}}\to[u^{*}]_{W^{s,p}}$ as $r\to1^{-}$. 
\end{thm}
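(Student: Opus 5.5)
The plan is to exploit the hypothesis that $u$ extends real-analytically to an open neighborhood $U\supset\overline{\mathbb{D}}$. This hypothesis is strong enough that the mean-value/Poincaré mechanism of Theorem~\ref{Thm main result} is not needed. That mechanism is unavailable here anyway: for $\Delta u=F(u)$ the circle means $\overline{u_r}$ are no longer constant, since $\frac{d}{dr}\overline{u_r}$ is governed by $\int_{|z|<r}F(u)$. In fact the seminorm hypothesis \eqref{eq:semilinear-hyp} will turn out to be automatically satisfied, and I would note this explicitly. Throughout, I set $u^{*}:=u|_{\mathbb{S}^{1}}$, the restriction of the extension. Since the extension is continuous on $\overline{\mathbb{D}}$, $u^{*}$ is also the pointwise radial limit $\lim_{r\to1^-}u(r\xi)$ for every $\xi$.

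\textbf{Step 1 ($L^p$ bound).} The set $\overline{\mathbb{D}}$ is a compact subset of $U$, so $M:=\sup_{\overline{\mathbb{D}}}|u|<\infty$. Hence $\|u_r\|_{L^p(\mathbb{S}^1)}\le(2\pi)^{1/p}M$ for all $r$, which is the first conclusion.

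\textbf{Step 2 ($u^{*}\in W^{s,p}$).} The restriction $u^{*}$ is $C^1$ (indeed real-analytic) on $\mathbb{S}^1$. It therefore lies in $L^p$ and is Lipschitz, with $|u^{*}(e^{i\xi})-u^{*}(e^{i\sigma})|\le L\,|e^{i\xi}-e^{i\sigma}|$. The Gagliardo integrand is then bounded by $L^p|e^{i\xi}-e^{i\sigma}|^{p-1-sp}$. This is integrable on $(-\pi,\pi)^2$ because $p-1-sp>-1$, which is equivalent to $p(1-s)>0$.

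\textbf{Step 3 (seminorm convergence).} This is the only step with any content. I set $w_r:=u_r-u^{*}$, viewed as a function of $\theta$. Its tangential derivative is
\[
\partial_\theta w_r(\theta)=ir e^{i\theta}\,\partial_z u(re^{i\theta})-i r e^{-i\theta}\partial_{\bar z}u(re^{i\theta})-\partial_\theta u^{*}(\theta).
\]
Since $\nabla u$ is uniformly continuous on the compact set $\overline{\mathbb{D}}$, this derivative tends to $0$ uniformly in $\theta$ as $r\to1^-$; call its sup-norm $\varepsilon(r)\to0$. The mean value theorem along arcs then gives $|w_r(\xi)-w_r(\sigma)|\le\varepsilon(r)\,\frac{\pi}{2}|e^{i\xi}-e^{i\sigma}|$. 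Consequently
\[
[u_r-u^{*}]_{W^{s,p}}^p\le C\,\varepsilon(r)^p\iint|e^{i\xi}-e^{i\sigma}|^{p-1-sp}\,d\xi\,d\sigma\longrightarrow0,
\]
and the reverse triangle inequality for seminorms yields $[u_r]_{W^{s,p}}\to[u^{*}]_{W^{s,p}}$. The same Lipschitz bound with $\varepsilon(r)$ replaced by $\sup_{\overline{\mathbb{D}}}|\nabla u|$ shows that \eqref{eq:semilinear-hyp} holds automatically.

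The main obstacle is conceptual rather than technical. The argument relies entirely on regularity up to the boundary; no analogue of Theorem~\ref{Thm main result} is used or available, because the mean-value pinning fails for non-harmonic $u$. If one wished to weaken the hypothesis to interior regularity only, I would first try to control the drift of the means $\overline{u_r}$ through Green's identity, $\frac{d}{dr}\overline{u_r}=\frac{1}{2\pi r}\int_{|z|<r}F(u)$. That route would require an a priori bound on $F(u)$ in $L^1(\mathbb{D})$, and this is where I expect the real difficulty would lie.
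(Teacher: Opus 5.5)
Your proof is correct, and it takes a genuinely different and much more direct route than the paper.

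\textbf{The paper's route.} It splits $u=v+w$, where $v$ is harmonic and carries the boundary values of $u$, and $w$ solves $\Delta w=F(u)$ with $w|_{\mathbb{S}^{1}}=0$. Boundary Schauder theory makes $w$ smooth up to the boundary, so a Lipschitz estimate bounds $[w_r]_{W^{s,p}}$ uniformly. The hypothesis \eqref{eq:semilinear-hyp} is then transferred to $v$, and Theorem~\ref{thm:harmonic-bdy} is applied to $v$. That theorem in turn rests on Theorem~\ref{Thm main result}, on boundedness of the conjugate-function operator on $B^{s}_{p,p}$, and on strong continuity of the Poisson semigroup. The pieces are reassembled using dominated convergence for $w$.

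\textbf{Your route.} You use that the standing assumption already puts $u$ in $C^{1}(\overline{\mathbb{D}})$; the paper itself notes $u\in C^{\infty}(\overline{\mathbb{D}})$ in the first line of its proof. All three conclusions then follow from three facts: $u$ is bounded, $u^{*}$ is Lipschitz, and $\partial_\theta u_r\to\partial_\theta u^{*}$ uniformly. You never use the equation, the hypothesis \eqref{eq:semilinear-hyp}, or any Besov-space machinery. Your observation that \eqref{eq:semilinear-hyp} holds automatically is correct. It shows that, as stated, the theorem does not genuinely exercise the seminorm criterion.

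\textbf{What the paper's decomposition buys.} It is more robust. The mean-value pinning you describe as unavailable is recovered on the harmonic part $v$, and the drift of the circle means is absorbed into $w$. Suppose the hypothesis is weakened, as in the remark after the theorem, to regularity of $F(u)$ up to the boundary only. Then one defines $w$ by the Dirichlet problem and sets $v:=u-w$. In that setting $u$ need not be continuous on $\overline{\mathbb{D}}$ (take $F\equiv0$), your argument no longer applies, and the seminorm criterion for $v$ becomes essential. This decomposition also settles the difficulty raised in your last paragraph, without any Green's-identity estimate on $\int_{|z|<r}F(u)$.
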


\begin{proof}
Since $u$ extends real-analytically to a neighborhood $U\supset\overline{\mathbb{D}}$,
the function $u$ is in particular $C^{\infty}(\overline{\mathbb{D}})$.
Decompose $u=v+w$ where $v$ is the harmonic function in $\mathbb{D}$
with boundary data $v|_{\mathbb{S}^{1}}=u|_{\mathbb{S}^{1}}$, and
$w=u-v$ solves 
\begin{equation}
\Delta w=F(u)\quad\text{in }\mathbb{D},\qquad w|_{\mathbb{S}^{1}}=0.\label{eq:w-eq}
\end{equation}

\textbf{Step 1: $w\in C^{\infty}(\overline{\mathbb{D}})$ with uniform
seminorm control.} Since $u$ extends real-analytically to a neighborhood
of $\overline{\mathbb{D}}$, the composition $F(u)$ is real-analytic
--- and in particular $C^{\infty}$ --- on a neighborhood of $\overline{\mathbb{D}}$;
in particular $F(u)\in C^{\infty}(\overline{\mathbb{D}})$. The function
$w$ solves $\Delta w=F(u)$ in $\mathbb{D}$ with $w|_{\mathbb{S}^{1}}=0$,
where the right-hand side $F(u)$ is $C^{\infty}(\overline{\mathbb{D}})$
and the boundary $\mathbb{S}^{1}$ is smooth. By boundary Schauder
estimates (see \cite{GilbargTrudinger}, Chapter~6), $w\in C^{\infty}(\overline{\mathbb{D}})$.
In particular $\nabla w$ extends continuously to $\overline{\mathbb{D}}$,
so $\|\nabla w\|_{L^{\infty}(\overline{\mathbb{D}})}<\infty$ and
$w$ is Lipschitz on $\overline{\mathbb{D}}$: there exists $L=\|\nabla w\|_{L^{\infty}(\overline{\mathbb{D}})}<\infty$
such that 
\[
|w(z_{1})-w(z_{2})|\le L|z_{1}-z_{2}|\qquad\text{for all }z_{1},z_{2}\in\overline{\mathbb{D}}.
\]
For any $0<s<1$ and $1<p<\infty$, a Lipschitz function belongs to
$W^{s,p}(\mathbb{S}^{1})$ with 
\[
[w_{r}]^{p}_{W^{s,p}(\mathbb{S}^{1})}=\iint_{\mathbb{S}^{1}\times\mathbb{S}^{1}}\frac{|w(r\xi)-w(r\eta)|^{p}}{|\xi-\eta|^{1+sp}}\,d\xi\,d\eta\le L^{p}\iint_{\mathbb{S}^{1}\times\mathbb{S}^{1}}\frac{d\xi\,d\eta}{|\xi-\eta|^{1+(s-1)p}}.
\]
Since $(s-1)p<0$, the exponent $1+(s-1)p<1$, so the last integral
is finite. Hence there exists $C_{w}<\infty$, depending only on $s$,
$p$, and $L$, such that 
\begin{equation}
[w_{r}]_{W^{s,p}(\mathbb{S}^{1})}\le C_{w}\qquad\text{uniformly in }r\in(0,1).\label{eq:wr-bound}
\end{equation}

\textbf{Step 2: Transfer the seminorm hypothesis to $v$.} Since $u=v+w$
and the Gagliardo seminorm satisfies $[v_{r}]_{W^{s,p}}\le2^{1-\frac{1}{p}}([u_{r}]_{W^{s,p}}+[w_{r}]_{W^{s,p}})$,
we obtain from \eqref{eq:semilinear-hyp} and \eqref{eq:wr-bound}:
\[
\sup_{0<r<1}[v_{r}]_{W^{s,p}(\mathbb{S}^{1})}\le2^{1-\frac{1}{p}}\left(\sup_{r}[u_{r}]_{W^{s,p}}+C_{w}\right)<\infty.
\]

\textbf{Step 3: Apply Theorem~\ref{thm:harmonic-bdy} to $v$.} Since
$v$ is harmonic and $\sup_{r}[v_{r}]_{W^{s,p}}<\infty$, Theorem~\ref{thm:harmonic-bdy}
gives $v\in\mathbf{H}^{p}(\mathbb{D})$ and $v^{*}\in W^{s,p}(\mathbb{S}^{1})$.

\textbf{Step 4: Conclude for $u$.} Since $w\in C^{\infty}(\overline{\mathbb{D}})$,
we have $w^{*}:=w|_{\mathbb{S}^{1}}\in C^{\infty}(\mathbb{S}^{1})\subset W^{s,p}(\mathbb{S}^{1})$.
Therefore $u^{*}=v^{*}+w^{*}\in W^{s,p}(\mathbb{S}^{1})$.

For the $L^{p}$ bound: since $v\in\mathbf{H}^{p}(\mathbb{D})$ we
have $\sup_{r}\|v_{r}\|_{L^{p}}<\infty$, and since $w\in C^{\infty}(\overline{\mathbb{D}})$
we have $\sup_{r}\|w_{r}\|_{L^{p}}\le\|w\|_{L^{\infty}(\mathbb{D})}(2\pi)^{\frac{1}{p}}<\infty$.
Hence $\sup_{r}\|u_{r}\|_{L^{p}}\le\sup_{r}\|v_{r}\|_{L^{p}}+\sup_{r}\|w_{r}\|_{L^{p}}<\infty$.

The seminorm convergence $[u_{r}]_{W^{s,p}}\to[u^{*}]_{W^{s,p}}$
follows from the triangle inequality and the corresponding convergence
for $v$ given by Theorem~\ref{thm:harmonic-bdy}(3), combined with
$[w_{r}]_{W^{s,p}}\to[w^{*}]_{W^{s,p}}$, which holds by dominated
convergence: the Lipschitz bound $|w(r\xi)-w(r\eta)|\le L|\xi-\eta|$
gives a uniform integrable majorant $L^{p}|\xi-\eta|^{p-1-sp}$ for
the Gagliardo integrand, and $w(r\xi)\to w(\xi)$ pointwise as $r\to1^{-}$
by continuity of $w$ on $\overline{\mathbb{D}}$. 
\end{proof}

\begin{rem}
The hypothesis that $u$ extends real-analytically beyond $\overline{\mathbb{D}}$
is used only to ensure $w\in C^{\infty}(\overline{\mathbb{D}})$ via
elliptic regularity up to the boundary. It can be replaced by the
weaker assumption that $F(u)\in C^{s}(\overline{\mathbb{D}})$, which
is sufficient for the uniform seminorm bound \eqref{eq:wr-bound}.
Interior elliptic regularity alone (without boundary extension) gives
$w\in C^{\infty}(\mathbb{D})$ but not uniform control of $[w_{r}]_{W^{s,p}}$
as $r\to1^{-}$, which is why the boundary extension hypothesis is
needed. 
\end{rem}

\subsection{Variational characterization of $W^{s,p}(\mathbb{S}^{1})$}
\begin{prop}[Variational characterization]
\label{prop:variational} Let $g\in L^{p}(\mathbb{S}^{1})$ with
$\hat{g}(k)=0$ for all $k<0$. Then $g\in W^{s,p}(\mathbb{S}^{1})$
if and only if its Poisson extension $u=P[g]$ satisfies 
\begin{equation}
\sup_{0<r<1}[u_{r}]_{W^{s,p}(\mathbb{S}^{1})}<+\infty.\label{eq:variational-cond}
\end{equation}
In this case the supremum equals $[g]_{W^{s,p}(\mathbb{S}^{1})}$
exactly, and the map $r\mapsto[u_{r}]_{W^{s,p}(\mathbb{S}^{1})}$
is non-decreasing on $(0,1)$ with 
\[
\lim_{r\to1^{-}}[u_{r}]_{W^{s,p}(\mathbb{S}^{1})}=[g]_{W^{s,p}(\mathbb{S}^{1})}.
\]
\end{prop}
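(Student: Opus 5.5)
The plan is to treat the two implications separately and then prove the monotonicity statement, which is the only genuinely new ingredient. Since $g\in L^{p}(\mathbb{S}^{1})$ with $\hat g(k)=0$ for $k<0$ and $p>1$, the Poisson extension $u=P[g]$ is analytic in $\mathbb{D}$. Moreover $u\in\mathbf{H}^{p}(\mathbb{D})$, $u_{r}=P_{r}*g$, and $u^{*}=g$ a.e.

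\emph{Direction ($\Rightarrow$).} Suppose $g\in W^{s,p}(\mathbb{S}^{1})$. By Proposition~\ref{prop:poisson-contraction}, $[u_{r}]_{W^{s,p}}=[P_{r}*g]_{W^{s,p}}\le[g]_{W^{s,p}}$ for every $r$. Taking the supremum over $r$ gives \eqref{eq:variational-cond}, together with the bound $\sup_{r}[u_{r}]\le[g]$.

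\emph{Direction ($\Leftarrow$).} Suppose \eqref{eq:variational-cond} holds. Then Theorem~\ref{Thm main result} applies to the analytic function $u$. Its conclusion (2) gives $u^{*}\in W^{s,p}(\mathbb{S}^{1})$, and $u^{*}=g$ because the radial limits of $P[g]$ equal $g$ a.e. Conclusion (3) gives $[u_{r}]_{W^{s,p}}\to[g]_{W^{s,p}}$ as $r\to1^{-}$. Combining this limit with the contraction bound from the first direction shows that the supremum equals $[g]_{W^{s,p}}$, exactly as in Proposition~\ref{prop:equiv-norm}.

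\emph{Monotonicity.} This is the step I expect to need care. I would use the semigroup identity $P_{\rho}*P_{r/\rho}=P_{r}$ for $0<r<\rho<1$, which holds because the Fourier multipliers are $\rho^{|k|}(r/\rho)^{|k|}=r^{|k|}$. It gives
\[
u_{r}=P_{r}*g=P_{r/\rho}*(P_{\rho}*g)=P_{r/\rho}*u_{\rho}.
\]
Since $u_{\rho}$ is smooth, it lies in $W^{s,p}(\mathbb{S}^{1})$. Applying Proposition~\ref{prop:poisson-contraction} with parameter $r/\rho\in(0,1)$ and $f=u_{\rho}$ then yields $[u_{r}]_{W^{s,p}}\le[u_{\rho}]_{W^{s,p}}$, so $r\mapsto[u_{r}]_{W^{s,p}}$ is non-decreasing. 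A non-decreasing function has a limit as $r\to1^{-}$ equal to its supremum, so monotonicity and the value $[g]_{W^{s,p}}$ of the supremum together give the stated limit.

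\emph{Point to check.} Proposition~\ref{prop:poisson-contraction} is proved with the flat-torus form of the seminorm, $|t|^{-(1+sp)}$, whereas the paper defines $[\cdot]_{W^{s,p}}$ with the chordal kernel $|e^{it}-1|^{-(1+sp)}$. The Jensen argument in that proof works for any nonnegative kernel depending only on the difference $t$: one obtains $\int|\Delta_{t}(P_{r}*f)|^{p}\le\int|\Delta_{t}f|^{p}$ for each fixed $t$, and then integrates against the kernel. So the contraction, and hence the exact equality and the monotonicity, hold for the chordal seminorm itself and not merely up to constants. Everything else is an assembly of Theorem~\ref{Thm main result} and Proposition~\ref{prop:poisson-contraction}.
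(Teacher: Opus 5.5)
Your proposal is correct and follows the paper's proof essentially step for step: the contraction gives ($\Rightarrow$), Theorem~\ref{Thm main result} together with $u^{*}=g$ gives ($\Leftarrow$), the identity $u_{r}=P_{r/\rho}*u_{\rho}$ plus the contraction gives monotonicity, and the limit comes from conclusion (3), which the paper packages as Proposition~\ref{prop:equiv-norm}. Your remark that the Jensen argument applies verbatim to the chordal kernel $|e^{it}-1|^{-(1+sp)}$ is a worthwhile addition: the paper's proof of Proposition~\ref{prop:poisson-contraction} only reaches the intrinsic seminorm through the flat kernel up to equivalence, and that by itself would not justify the claimed exact equality and monotonicity.
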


\begin{proof}
\textbf{($\Rightarrow$)} Suppose $g\in W^{s,p}(\mathbb{S}^{1})$.
Since $P_{r}$ is an approximate identity of total mass one, convolution
with $P_{r}$ does not increase the Gagliardo seminorm: 
\[
[u_{r}]_{W^{s,p}}=[P_{r}*g]_{W^{s,p}}\le[g]_{W^{s,p}}<\infty
\]
for all $r\in(0,1)$. Hence \eqref{eq:variational-cond} holds. The
non-decrease of $r\mapsto[u_{r}]_{W^{s,p}}$ follows from the semigroup
property $P_{r_{1}}*P_{r_{2}}=P_{r_{1}r_{2}}$ and the contraction
property just used: for $r_{1}\le r_{2}$, 
\[
[u_{r_{1}}]_{W^{s,p}}=[P_{r_{1}}*g]_{W^{s,p}}\le[P_{r_{2}}*g]_{W^{s,p}}=[u_{r_{2}}]_{W^{s,p}}.
\]
(More precisely: $u_{r_{1}}=P_{r_{1}/r_{2}}*u_{r_{2}}$, so the contraction
gives $[u_{r_{1}}]_{W^{s,p}}\le[u_{r_{2}}]_{W^{s,p}}$.) The convergence
$[u_{r}]_{W^{s,p}}\to[g]_{W^{s,p}}$ is Proposition~\ref{prop:equiv-norm}.

\textbf{($\Leftarrow$)} Suppose \eqref{eq:variational-cond} holds.
Since $g\in L^{p}(\mathbb{S}^{1})$ with $\hat{g}(k)=0$ for $k<0$,
its Poisson extension is 
\[
u(re^{i\theta})=\sum^{\infty}_{k=0}\hat{g}(k)\,r^{k}e^{ik\theta},
\]
which is analytic in $\mathbb{D}$ with $u(0)=\hat{g}(0)$ finite.
Theorem~\ref{Thm main result} now applies (with $|u(0)|<\infty$
guaranteed by $g\in L^{p}$), giving $u^{*}\in W^{s,p}(\mathbb{S}^{1})$.
Since $u^{*}=g$ a.e.\ (as $u=P[g]$ and $g\in L^{p}$), we conclude
$g\in W^{s,p}(\mathbb{S}^{1})$.

The equality $\sup_{r}[u_{r}]_{W^{s,p}}=[g]_{W^{s,p}}$ is then immediate
from Proposition~\ref{prop:equiv-norm}. 
\end{proof}

\begin{rem}
Proposition~\ref{prop:variational} gives a purely interior characterization
of $W^{s,p}(\mathbb{S}^{1})$: a boundary function $g$ (of analytic
type) has fractional Sobolev regularity if and only if its Poisson
extension has uniformly bounded radial oscillation. This can be used
as a criterion in variational problems where the boundary data is
not prescribed explicitly but the Poisson extension is constructed
directly. 
\end{rem}

\subsection{Regularity of value functions in stochastic control}

Let $(\Omega,\mathcal{F},\mathbb{P})$ be a probability space carrying
a standard two-dimensional Brownian motion $(B_{t})_{t\ge0}$. For
$z\in\mathbb{D}$, let $\tau_{z}:=\inf\{t\ge0:|B^{z}_{t}|=1\}$ be
the first exit time of $B^{z}$ (Brownian motion started at $z$)
from the unit disc $\mathbb{D}$.
\begin{thm}[Regularity of the value function]
\label{thm:stochastic} Let $\phi\in L^{p}(\mathbb{S}^{1})$ and
define the value function 
\begin{equation}
V(z):=\mathbb{E}^{z}[\phi(B^{z}_{\tau_{z}})],\qquad z\in\mathbb{D}.\label{eq:value-function}
\end{equation}
Then $V$ is the harmonic extension of $\phi$, i.e.\ $V=P[\phi]$.
Suppose that 
\begin{equation}
\sup_{0<r<1}[V_{r}]_{W^{s,p}(\mathbb{S}^{1})}<+\infty,\label{eq:stochastic-hyp}
\end{equation}
where $V_{r}(\xi):=V(r\xi)$ for $\xi\in\mathbb{S}^{1}$. Then the
payoff function satisfies $\phi\in W^{s,p}(\mathbb{S}^{1})$, and
\[
[V_{r}]_{W^{s,p}(\mathbb{S}^{1})}\longrightarrow[\phi]_{W^{s,p}(\mathbb{S}^{1})}\quad\text{as }r\to1^{-}.
\]
\end{thm}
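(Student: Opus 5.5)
The plan is to reduce to Theorem~\ref{thm:harmonic-bdy}, applied to $\mathrm{Re}\,V$ and $\mathrm{Im}\,V$ separately. The first task is to identify $V$ with $P[\phi]$. The exit distribution of planar Brownian motion started at $z\in\mathbb{D}$ is harmonic measure $\omega_z$ on $\mathbb{S}^1$. This measure has density $P_{|z|}(\theta-\arg z)\,\frac{d\theta}{2\pi}$, which is the Poisson kernel. One way to see this is to apply Itô's formula, or optional stopping, to the bounded harmonic function $P[\psi]$ for continuous $\psi$. Continuity of $P[\psi]$ up to $\overline{\mathbb{D}}$ and $\tau_z<\infty$ a.s.\ then give
\[
\mathbb{E}^z[\psi(B^z_{\tau_z})]=P[\psi](z).
\]
A monotone class argument extends this identity to all $\phi\in L^1(\mathbb{S}^1)\supset L^p(\mathbb{S}^1)$. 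The expectation is finite because $\omega_z$ has bounded density for fixed $z$. Consequently $V=P[\phi]$ is harmonic in $\mathbb{D}$, with $V_r=P_r*\phi$.

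Next I split into real and imaginary parts, or simply assume $\phi$ real-valued, as in the paper's definition of $W^{s,p}(\mathbb{S}^1)$. The real and imaginary parts of $V$ are harmonic, and pointwise $|\mathrm{Re}\,a|\le|a|$ gives
\[
[\mathrm{Re}\,V_r]_{W^{s,p}}\le[V_r]_{W^{s,p}},
\]
and likewise for the imaginary part. Hence hypothesis \eqref{eq:stochastic-hyp} transfers to each part, and Theorem~\ref{thm:harmonic-bdy} applies to $v=\mathrm{Re}\,V$. It yields a radial trace $v^*\in W^{s,p}(\mathbb{S}^1)$ together with
\[
[v_r]_{W^{s,p}}\to[v^*]_{W^{s,p}}.
\]
Moreover, the proof of that theorem gives $v_r\to v^*$ strongly in $W^{s,p}(\mathbb{S}^1)$.

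The step I expect to need the most care is identifying the trace $v^*$ with $\mathrm{Re}\,\phi$. Since $\phi\in L^p$ with $1<p<\infty$, the classical Fatou/Poisson theory gives two facts: $P_r*\phi\to\phi$ in $L^p(\mathbb{S}^1)$, and $P_r*\phi\to\phi$ a.e.\ as $r\to1^-$. Theorem~\ref{thm:harmonic-bdy} defines $v^*$ as the a.e.\ radial limit of $v_r=\mathrm{Re}(P_r*\phi)$. Uniqueness of a.e.\ limits then gives $v^*=\mathrm{Re}\,\phi$ a.e., with the same argument for the imaginary part. Therefore $\phi\in W^{s,p}(\mathbb{S}^1)$.

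Finally, strong convergence $V_r\to\phi$ in $W^{s,p}$ follows by combining the two parts. Alternatively, it follows directly from Theorem~\ref{fractional Poisson} now that $\phi\in W^{s,p}$. The reverse triangle inequality for the seminorm then gives
\[
\bigl|[V_r]_{W^{s,p}}-[\phi]_{W^{s,p}}\bigr|\le[V_r-\phi]_{W^{s,p}}\to0,
\]
which is the claimed convergence. As a consistency check, Proposition~\ref{prop:variational} and Proposition~\ref{prop:poisson-contraction} also show that $r\mapsto[V_r]_{W^{s,p}}$ is nondecreasing, with supremum $[\phi]_{W^{s,p}}$.
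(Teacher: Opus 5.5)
Your proposal is correct and follows essentially the same route as the paper: identify $V=P[\phi]$, apply Theorem~\ref{thm:harmonic-bdy} to obtain a trace in $W^{s,p}(\mathbb{S}^{1})$ together with seminorm convergence, and identify that trace with $\phi$ using classical Poisson/Fatou theory. Your additions (optional stopping and Riesz/monotone-class uniqueness for the exit distribution, and the split into real and imaginary parts so that Theorem~\ref{thm:harmonic-bdy}, which is stated for real $v$, applies) only make explicit what the paper calls ``classical'' or leaves implicit.
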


\begin{proof}
It is classical that $V=P[\phi]$ is the unique solution of the Dirichlet
problem $\Delta V=0$ in $\mathbb{D}$ with $V|_{\mathbb{S}^{1}}=\phi$
(see \cite{Rudin2001}). In particular $V$ is harmonic in $\mathbb{D}$.
Hypothesis \eqref{eq:stochastic-hyp} together with Theorem~\ref{thm:harmonic-bdy}
gives $V\in\mathbf{H}^{p}(\mathbb{D})$ and $V^{*}\in W^{s,p}(\mathbb{S}^{1})$.
Since $V=P[\phi]$ and $\phi\in L^{p}(\mathbb{S}^{1})$, the radial
limits of $V$ satisfy $V^{*}=\phi$ a.e.\ on $\mathbb{S}^{1}$ (by
the $L^{p}$ convergence of the Poisson integral). Hence $\phi=V^{*}\in W^{s,p}(\mathbb{S}^{1})$,
and the seminorm convergence is conclusion (3) of Theorem~\ref{thm:harmonic-bdy}. 
\end{proof}

\begin{rem}
The condition \eqref{eq:stochastic-hyp} has a natural probabilistic
interpretation. The quantity $[V_{r}]_{W^{s,p}(\mathbb{S}^{1})}$
measures the fractional oscillation of the expected payoff $\xi\mapsto\mathbb{E}^{r\xi}[\phi(B_{\tau})]$
as the starting point varies over the circle of radius $r$. The theorem
says: if this oscillation is uniformly bounded over all radii, then
the payoff function $\phi$ itself must have fractional Sobolev regularity.
This is a regularity-from-interior criterion: it deduces smoothness
of the boundary data from interior properties of the value function,
without ever computing $\phi$ directly. 
\end{rem}

\begin{rem}
The condition \eqref{eq:stochastic-hyp} is \emph{a priori} weaker
than, and by Theorem~\ref{thm:stochastic} equivalent to, requiring
$\phi\in W^{s,p}(\mathbb{S}^{1})$ directly, but is often easier to
verify in applications. Indeed, $\phi$ may be defined implicitly
as the limit of an approximating sequence and its regularity is unknown.
The interior condition \eqref{eq:stochastic-hyp} can often be verified
by estimating the oscillation of $V$ on concentric circles, which
reduces to PDE estimates on $V$ rather than direct analysis of $\phi$. 
\end{rem}

\section{Explicit Examples}

\label{sec:examples}

We illustrate the main theorem on four explicit families of functions.
The first three are chosen so that the seminorm condition can be verified
by a direct computation. The fourth --- conformal maps of Hölder
domains --- is the most instructive: the conclusion is reachable
by classical methods, but our criterion reduces the verification to
a single geometric estimate, bypassing both the interior Sobolev norm
and the Fourier coefficients of the map.

Throughout, we work with $p=2$ where Fourier series are available,
and note which computations extend to general $p$.
\begin{example}
\label{ex:monomial} Let $n\ge1$ and $u(z)=z^{n}$. Then $u_{r}(e^{i\theta})=r^{n}e^{in\theta}$
and by the Fourier characterization \eqref{eq:Hs-Fourier}, 
\[
[u_{r}]^{2}_{W^{s,2}(\mathbb{S}^{1})}\propto n^{2s}r^{2n}.
\]
Since $r^{2n}<1$ for all $r\in(0,1)$, 
\[
\sup_{0<r<1}[u_{r}]_{W^{s,2}(\mathbb{S}^{1})}\propto n^{s}<\infty.
\]
Theorem~\ref{Thm main result} applies: $u\in H^{2}(\mathbb{D})$
and $u^{*}(e^{i\theta})=e^{in\theta}\in W^{s,2}(\mathbb{S}^{1})$
for all $0<s<1$. The norm $\|u\|_{\mathcal{W}^{s,2}}=|u(0)|+n^{s}=n^{s}$
grows algebraically in the frequency $n$, reflecting the increasing
oscillation of higher modes. 
\end{example}

\begin{example}
\label{ex:power-series} Let $\varepsilon>0$ and define 
\[
u(z)=\sum^{\infty}_{k=1}k^{-s-1/2-\varepsilon}z^{k},
\]
so $a_{k}=k^{-s-1/2-\varepsilon}$. By \eqref{eq:Hs-Fourier}, 
\[
[u_{r}]^{2}_{W^{s,2}(\mathbb{S}^{1})}\propto\sum^{\infty}_{k=1}k^{2s}|a_{k}|^{2}r^{2k}=\sum^{\infty}_{k=1}k^{2s}\cdot k^{-2s-1-2\varepsilon}\cdot r^{2k}=\sum^{\infty}_{k=1}k^{-1-2\varepsilon}r^{2k}.
\]
Since $\sum^{\infty}_{k=1}k^{-1-2\varepsilon}<\infty$ for $\varepsilon>0$,
we have 
\[
\sup_{0<r<1}[u_{r}]^{2}_{W^{s,2}(\mathbb{S}^{1})}\le\sum^{\infty}_{k=1}k^{-1-2\varepsilon}<\infty.
\]
Theorem~\ref{Thm main result} delivers both conclusions simultaneously:
$u\in H^{2}(\mathbb{D})$ and $u^{*}\in W^{s,2}(\mathbb{S}^{1})$.
\end{example}

\medskip{}
\noindent\textit{Comparison with the classical route.} To reach the
same conclusions directly, one would verify two separate series: 
\[
H_{2}(u)^{2}\le\sum^{\infty}_{k=1}|a_{k}|^{2}=\sum^{\infty}_{k=1}k^{-2s-1-2\varepsilon}<\infty\quad\text{(Hardy membership)}
\]
and 
\[
[u^{*}]^{2}_{W^{s,2}}\propto\sum^{\infty}_{k=1}k^{2s}|a_{k}|^{2}=\sum^{\infty}_{k=1}k^{-1-2\varepsilon}<\infty\quad\text{(boundary regularity)}.
\]
In this example all three series are equally explicit, so the advantage
of our criterion is that it collapses two checks into one. The deeper
advantage --- collapsing two hard checks into one geometric estimate
--- appears in Example~\ref{ex:conformal} below. 
\begin{example}
\label{ex:log} The Taylor expansion is $u(z)=\sum^{\infty}_{k=1}\frac{z^{k}}{k}$,
so $a_{k}=1/k$. By \eqref{eq:Hs-Fourier}, 
\[
[u_{r}]^{2}_{W^{s,2}(\mathbb{S}^{1})}\propto\sum^{\infty}_{k=1}k^{2s}\cdot\frac{1}{k^{2}}\cdot r^{2k}=\sum^{\infty}_{k=1}k^{2s-2}r^{2k}.
\]
\end{example}

\noindent\textbf{Case $s<\tfrac{1}{2}$}: The exponent $2s-2<-1$,
so $\sum^{\infty}_{k=1}k^{2s-2}<\infty$. Hence 
\[
\sup_{0<r<1}[u_{r}]_{W^{s,2}}<\infty,
\]
and Theorem~\ref{Thm main result} gives $u^{*}\in W^{s,2}(\mathbb{S}^{1})$.

\noindent\textbf{Case $s=\tfrac{1}{2}$}: The series becomes $\sum^{\infty}_{k=1}k^{-1}r^{2k}$,
which diverges as $r\to1^{-}$ (harmonic series). Hence $\sup_{r}[u_{r}]_{W^{1/2,2}}=+\infty$
and the hypothesis of Theorem~\ref{Thm main result} fails. Indeed
$u^{*}(\theta)=\log|1-e^{i\theta}|^{-1}\notin W^{1/2,2}(\mathbb{S}^{1})$,
which is consistent.

\noindent\textbf{Case $s>\tfrac{1}{2}$}: Similarly $\sum^{\infty}_{k=1}k^{2s-2}=\infty$
and the supremum is infinite.

This example shows that the threshold $s=1/2$ is \emph{sharp} for
$\log\frac{1}{1-z}$: the seminorm criterion correctly identifies
the exact range of $s$ for which $u^{*}\in W^{s,2}(\mathbb{S}^{1})$,
and the criterion fails precisely where the conclusion fails. 

\begin{example}
\label{ex:conformal} Let $\Omega\subset\mathbb{C}$ be a bounded
simply connected domain whose boundary $\partial\Omega$ is a Jordan
curve whose arc-length parametrization is of class $C^{0,\alpha}$
for some $\alpha\in(0,1)$ --- for instance a domain with a corner
of opening angle $\pi\alpha$, or a Dini-smooth domain. Let
\[
u:\mathbb{D}\to\Omega
\]
be the Riemann mapping, normalized by $u(0)=z_{0}\in\Omega$ and $u'(0)>0$.
The function $u$ is analytic in $\mathbb{D}$, but its Taylor coefficients
$a_{k}$ depend on the full geometry of $\Omega$ in a non-explicit
way: they cannot be computed in closed form and the series $\sum|a_{k}|^{2}$
or $\sum k^{2s}|a_{k}|^{2}$ cannot be summed directly.
\end{example}

\medskip{}
\textbf{Classical routes are harder.} To verify $u^{*}\in W^{s,p}(\mathbb{S}^{1})$
by existing methods one would proceed via one of: 
\begin{enumerate}
\item \emph{Via Hölder regularity and embedding}: Warschawski's theorem
gives $u\in C^{0,\alpha}(\overline{\mathbb{D}})$, hence $u^{*}\in C^{0,\alpha}(\mathbb{S}^{1})$,
and the embedding $C^{0,\alpha}(\mathbb{S}^{1})\hookrightarrow W^{s,p}(\mathbb{S}^{1})$
for $s<\alpha$ then gives the conclusion. This requires invoking
two separate non-trivial results: Warschawski's boundary correspondence
theorem and the Hölder-to-Sobolev embedding. 
\item \emph{Via interior Sobolev norm}: Verify $u\in W^{s+\frac{1}{p},p}(\mathbb{D})$
and apply the classical trace theorem. Controlling the full interior
fractional Sobolev norm of a conformal map requires detailed estimates
on $u'$ in weighted spaces, going through Muckenhoupt weight theory
and the Jacobian of $u$ --- substantially more machinery. 
\end{enumerate}
\noindent\textbf{Our route reduces everything to one geometric estimate.}
The Kellogg--Warschawski theorem in its classical form applies to
$C^{1,\alpha}$ boundaries; for the general $C^{0,\alpha}$ Jordan
curve case, the uniform Hölder continuity of $u$ up to the boundary
follows from Warschawski's theorem on the modulus of continuity of
the boundary correspondence (see \cite{Pommerenke1992}, Theorem 3.5).
More precisely, Pommerenke's result applies when the arc-length parametrization
$\gamma:[-\pi,\pi]\to\partial\Omega$ satisfies $|\gamma(\xi)-\gamma(\sigma)|\le C|\xi-\sigma|^{\alpha}$
(i.e.\ is $C^{0,\alpha}$ as a map between intervals); under this
hypothesis the boundary correspondence and hence $u$ itself is $C^{0,\alpha}$-continuous
up to $\partial\mathbb{D}$, giving a constant $C=C(\Omega)>0$ such
that 
\begin{equation}
|u(z_{1})-u(z_{2})|\le C|z_{1}-z_{2}|^{\alpha}\qquad\text{for all }z_{1},z_{2}\in\overline{\mathbb{D}}.\label{eq:holder-u}
\end{equation}
Applying \eqref{eq:holder-u} with $z_{1}=r\xi$ and $z_{2}=r\eta$
for $\xi,\eta\in\mathbb{S}^{1}$: 
\[
\begin{alignedat}{1}[u_{r}]^{p}_{W^{s,p}(\mathbb{S}^{1})} & =\iint_{\mathbb{S}^{1}\times\mathbb{S}^{1}}\frac{|u(r\xi)-u(r\eta)|^{p}}{|\xi-\eta|^{1+sp}}\,d\xi\,d\eta\\
 & \le C^{p}\iint_{\mathbb{S}^{1}\times\mathbb{S}^{1}}\frac{|\xi-\eta|^{\alpha p}}{|\xi-\eta|^{1+sp}}\,d\xi\,d\eta\\
 & =C^{p}\iint_{\mathbb{S}^{1}\times\mathbb{S}^{1}}\frac{d\xi\,d\eta}{|\xi-\eta|^{1+sp-\alpha p}}.
\end{alignedat}
\]

\noindent The last integral is finite if and only if $1+sp-\alpha p<1$,
i.e.\ $s<\alpha$. Therefore, for every $s<\alpha$, 
\[
\sup_{0<r<1}[u_{r}]_{W^{s,p}(\mathbb{S}^{1})}\le C\left(\iint_{\mathbb{S}^{1}\times\mathbb{S}^{1}}\frac{d\xi\,d\eta}{|\xi-\eta|^{1+(s-\alpha)p}}\right)^{\frac{1}{p}}<+\infty.
\]
Theorem~\ref{Thm main result} now applies immediately and gives
$u\in\mathbf{H}^{p}(\mathbb{D})$ and $u^{*}\in W^{s,p}(\mathbb{S}^{1})$
for all $s<\alpha$.

\medskip{}
The advantage is not that the conclusion is unreachable classically
--- it is. The advantage is that \emph{our theorem shortens the proof
by one non-trivial step and avoids the need to control the interior
Sobolev norm of $u$}. Once $C^{0,\alpha}$ regularity of $u$ is
known, our criterion converts it to $W^{s,p}$ boundary regularity
in a single line, via a direct estimate on the Gagliardo double integral.
The classical route requires either an additional embedding theorem
or interior Sobolev estimates on the conformal map --- both of which
involve further machinery. The Gagliardo seminorm has direct geometric
content that can be estimated from the boundary geometry without knowing
the Taylor coefficients of $u$, and this is precisely the advantage
our theorem exploits. 
\begin{rem}
The sharpness of the threshold $s<\alpha$ in Example~\ref{ex:conformal}
is consistent with the known regularity theory for conformal maps:
a domain with a corner of interior angle $\pi\alpha$ (a wedge) produces
a conformal map with $u^{*}\in C^{0,\alpha}(\mathbb{S}^{1})$ but
$u^{*}\notin W^{\alpha,p}(\mathbb{S}^{1})$ in general. Our criterion
detects this threshold directly from the Gagliardo double integral,
without computing the Taylor series of $u$. 
\end{rem}

\end{document}